\numberwithin{equation}{section}
\newtheorem{definition}{Definition}[section]
\newtheorem{remark}[definition]{Remark}
\newtheorem{example}[definition]{Example}
\newtheorem{theorem}[definition]{Theorem}
\newtheorem{proposition}[definition]{Proposition}
\newtheorem{lemma}[definition]{Lemma}
\newtheorem{corollary}[definition]{Corollary}
\newtheorem{notation}[definition]{Notation}
\newtheorem{teo}{Theorem}
\theoremstyle{remark}
\newcommand{\mbE}{\mathbb{E}}
\newcommand{\mcA}{\mathcal{A}}
\newcommand{\mcB}{\mathcal{B}}
\newcommand{\mcC}{\mathcal{C}}
\newcommand{\mcD}{\mathcal{D}}
\newcommand{\mcI}{\mathcal{I}}
\newcommand{\mcP}{\mathcal{P}}
\newcommand{\mcS}{\mathcal{S}}
\newcommand{\mcX}{\mathcal{X}}
\newcommand{\mcY}{\mathcal{Y}}
\newcommand{\modu}{\mathrm{mod}}
\newcommand{\add}{\mathrm{add}}
\newcommand{\free}{\mathrm{free}}
\newcommand{\smd}{\mathrm{smd}}
\newcommand{\ind}{\mathrm{ind}}
\newcommand{\fgMod}{\mathsf{mod}}
\newcommand{\Hom}{\mathrm{Hom}}
\newcommand{\Ext}{\mathrm{Ext}}
\newcommand{\Obj}{\mathrm{Obj}}
\newcommand{\Mor}{\mathrm{Mor}}
\newcommand{\Ker}{\mathrm{Ker}}
\def\@seccntformat#1{%
  \protect\textup{\protect\@secnumfont
    \ifnum\pdfstrcmp{section}{#1}=0 \scshape\bfseries\fi
    \ifnum\pdfstrcmp{subsection}{#1}=0 \bfseries\fi
    \csname the#1\endcsname
    \protect\@secnumpunct
  }%
}
\begin{document}

\title{On the Iyama-Yoshino reduction in extriangulated categories}
\thanks{2020 MSC: 18G80 (18G10; 18G25)}
\thanks{Key Words: Iyama-Yoshino reduction, $(n+2)$-rigid subcategories, extriangulated categories}

\author{Mindy Y. Huerta} 
\dedicatory{Dedicated to professor Corina Sáenz on the occasion of her 60th birthday}
\address[M. Y. Huerta]{Facultad de Ciencias. Universidad Nacional Aut\'onoma de M\'exico. Circuito Exterior, Ciudad Universitaria. CP04510. Mexico City, MEXICO}
\email{mindyhp90@ciencias.unam.mx}

\begin{abstract}
In this paper, we provide an interpretation of the existing reduction process for extriangulated categories in general. This process allows us to obtain a new category which, for well-known cases, admits a triangulated structure. We will show that in general this new category is extriangulated but not necessarily triangulated.
\end{abstract}

\maketitle


\pagestyle{myheadings}
\markboth{\rightline {\scriptsize M. Y. Huerta}}
         {\leftline{\scriptsize On the Iyama-Yoshino reduction in extriangulated categories}}

\section*{\textbf{Introduction}}

Iyama-Yoshino reduction firstly appeared in \cite{IYmutation} describes
a process to get a $2$-Calabi-Yau triangulated category from another one through rigid objects. Among the two most important applications we have: (1) a bijection
between its cluster-tilting subcategories and those in the original category having 
as element the rigid object, and (2) Iyama-Yoshino reduction is closely 
related to other reduction techniques in representation theory (see \cite{jasso2015reduction, aihara2012silting}, for instance).

On the other hand, some years ago the notion of extriangulated categories was introduced in \cite{NP19} by 
H. Nakaoka and Y. Palu. This notion generalizes at 
the same time triangulated and exact categories and 
its importance lies in the fact that several works have been carried to this new context providing
new outcomes and proofs. Just to mention an example,
recently J. C. Cala and S. R. Hern\'andez proved a
characterization of closed subfuntors through $3\times 3$-lemma property in extriangulated categories where the proofs done in other contexts 
have no place (see \cite{CH25}). Due to the enormous usefulness that extriangulated categories have to generalize new theory, many topics can be studied from a
new point of view. As we can imagine, it has not been long time for the reduction process to be addressed in extriangulated categories.

In \cite{FMP23}, E. Faber, B. R. Marsh and M. Pressland provide a reduction technique for Frobenius extriangulated categories 
generalizing Iyama-Yoshino reduction which is
recovered by passing to stable categories. Both reductions are triangulated categories and, indeed,
there is a triangle equivalence between them.

\begin{teo}\cite[Theorem 4.16]{FMP23}\label{teo 1} If $\mathcal{F}$ is a stably $2$-Calabi-Yau Frobenius extriangulated 
category and $\mcX\subseteq \mathcal{F}$ is a funtorially finite rigid subcategory, then there is 
a triangle equivalence 
$$\underline{\mcX^{\perp_1}}\cong \mcX^{\perp_1}_{\underline{\mathcal{F}}}/\add(\mcX)$$
between the stable category of the reduction of $\mathcal{F}$ at $\mathcal{X}$, and the Iyama–Yoshino reduction of
the stable category $\mathcal{F}$ at $\mcX$.
\end{teo}

Later in \cite{huerta2024reduction}, the reduction technique in \cite{FMP23} was
extended.
This approach has an advantage in comparison with \cite{FMP23} since it can be applied for non Frobenius extriangulated categories. 
However, the reduction category in the latter does not necessarily admit a triangulated structure in 
contrast with Theorem~\ref{teo 1}. From this,
a natural question arises: how far the Iyama-Yoshino reduction can be carried on non Frobenius
extriangulated categories? The main of this work is to study an interpretation of Iyama-Yoshino reduction and
provide an analogue of Theorem~\ref{teo 1} for the non Frobenius case.

\subsection*{Organization of the paper} 

We begin Section~\ref{sec:preliminaries} by establishing notation, definitions and outcomes in extriangulated categories needed throughout this work. In Section~\ref{sec: quotients} we address quotients categories 
for which it is known that an extriangulated category is 
admitted. We present our main result in 
Theorem~\ref{teo: equiv extriang} where we give an equivalence of extriangulated categories between certain quotient categories determined by the class of projective-injective 
objects (see Remark~\ref{rmk: equiv extriang}). Section~\ref{sec: examples} is devoted to present examples of the equivalence given in Section~\ref{sec: quotients}. Finally, Section~\ref{sec: compatibility with triangulated} shows an example where these quotient
categories may not admit a triangulated structure in 
contrast to triangulated or Frobenius extriangulated cases (see Example~\ref{ex: red no frobenius}).


\subsection*{Conventions}
Throughout this article, $\mcC$ denotes an additive category. 
The main examples will be categories of finitely generated left $\Lambda$-modules over an Artin algebra $\Lambda$ which we denote by $\fgMod(\Lambda)$. We denote by 
$\Obj (\mcC)$ the class of objects in $\mcC$ and by
$\Mor (\mcC)$ the class of all the morphisms in $\mcC$. On the other hand, we write $\mcS \subseteq \mcC$ to say that $\mcS$ is a full subcategory of $\mcC$. All the class of objects in $\mcC$ are assumed to be full subcategories. Given $X, Y \in \mcC$, we denote by $\mcC(X,Y)$ or by $\Hom_\mcC (X, Y)$ the group of morphisms from $X$ to $Y$. In case $X$ and $Y$ are isomorphic, we write $X \simeq Y$. The notation $F \cong G$, on the other hand, is reserved to denote the existence of a natural isomorphism between functors $F$ and $G$.
Given a class $\mcA \subseteq \mcC$, we denote by ${\rm free}(\mathcal{A})$ the class of all the finite coproducts of objects in $\mcA,$ and $\smd(\mcA)$  the class of all the direct summands of objects in $\mcA.$ We set $\add(\mcA):=\smd(\free(\mcA)).$ Finally, we write $[1,n]$ to denote the set of the first $n$ natural numbers, for any $n\geq 1$.


\section{\textbf{Preliminaries}}\label{sec:preliminaries}

\subsection*{Extriangulated categories and terminology}

Now we recall some definitions and results related to extriangulated categories. 
For a detailed treatise on this matter, we recommend  the reader to see in
\cite{NP19, LNheartsoftwin, MDZtheoryAB}.

Let $\mathbb{E}:\mcC^{op}\times \mcC\to \mathrm{Ab}$ be an additive bifunctor. An $\mathbb{E}$-extension \cite[Definition 2.1 and Remark 2.2]{NP19} is a triplet $(A, \delta, C),$ where $A, C\in \mcC$ and $\delta\in \mathbb{E}(C, A).$ 
For any $a\in \mcC(A, A')$
and $c\in \mcC(C', C)$, we have $\mathbb{E}$-extensions $a\cdot \delta:=\mathbb{E}(C, a)(\delta)\in \mathbb{E}(C, A')$ and
$\delta \cdot c:=\mathbb{E}(c^{op}, A)(\delta)\in \mathbb{E}(C', A)$. In this terminology, we have
$(a\cdot \delta)\cdot c=a\cdot(\delta\cdot c)$ in $\mathbb{E}(C', A')$. Let $(A, \delta, C)$ and  $(A', \delta', C')$ be $\mathbb{E}$-extensions. A morphism $(a, c): 
(A, \delta, C)\to (A', \delta', C')$ of $\mathbb{E}$-extensions \cite[Definition 2.3]{NP19} is a pair of morphisms $a\in \mcC(A, A')$ and
$c\in \mcC(C, C')$ in $\mcC$, satisfying the equality $a\cdot \delta=\delta'\cdot c.$
We simply denote it as $(a, c): \delta\to \delta'$.
We obtain the category $\mathbb{E}$-$\Ext(\mcC)$ of $\mathbb{E}$-extensions, with composition and identities
naturally induced by those in $\mcC$. For any $A, C\in \mcC$, the zero element $0\in \mathbb{E}(C, A)$ is called the split $\mathbb{E}$-extension \cite[Definition 2.5]{NP19}.
\

Let $\delta=(A, \delta, C)$ and $\delta'=(A', \delta', C')$ be any $\mathbb{E}$-extensions, and let 
$C\mathop{\to}\limits^{\iota_{C}} C\oplus C\mathop{\leftarrow}\limits^{\iota_{C'}} C'$ and 
$A\mathop{\leftarrow}\limits^{p_{A}} A\oplus A'\mathop{\to}\limits^{p_{A'}}A'$ be coproduct and product in $\mcC$,
respectively. By the biadditivity of $\mathbb{E}$, we have a natural isomorphism
$$\mathbb{E}(C\oplus C', A\oplus A')\cong \mathbb{E}(C, A)\oplus \mathbb{E}(C, A')\oplus 
\mathbb{E}(C', A)\oplus \mathbb{E}(C', A').$$
Following \cite[Definition 2.6]{NP19}, let $\delta\oplus \delta'\in \mathbb{E}(C\oplus C', A\oplus A')$ be the element corresponding to $(\delta, 0, 0, 
\delta')$ through this isomorphism.
If $A=A'$ and $C=C'$, then the sum $\delta+\delta'\in \mathbb{E}(C, A)$ of $\delta, \delta'\in 
\mathbb{E}(C, A)$ is obtained by 
$$\delta+\delta'=\nabla_{A}\cdot (\delta\oplus \delta')\cdot\Delta_{C}$$
where $\Delta_{C}=\tiny{\left( 
\begin{array}{c}
1_C\\1_C
\end{array}
\right)} : C\to C\oplus C$ and $\nabla_{A}=(1_A\, 1_A): A\oplus A\to A$.
\

Two sequences of morphisms $A\mathop{\to}\limits^{x} B
\mathop{\to}\limits^{y} C$ and $A\mathop{\to}\limits^{x'} B'\mathop{\to}\limits^{y'} C$ in $\mcC$ are said to be
equivalent \cite[Definition 2.7]{NP19} if there exists an isomorphism $b\in \mcC(B, B')$ which makes the following diagram commutative
\[
\xymatrix@R=4mm{
& B\ar[dr]^{y}\ar[dd]^{b}_{\wr} &\\
A\ar[ur]^{x}\ar[dr]_{x'} & & C.\\
& B'\ar[ur]_{y'} & 
}
\]
We denote the equivalence class of $A\mathop{\to}\limits^{x} B\mathop{\to}\limits^{y} C$ by 
$[A\mathop{\to}\limits^{x} B\mathop{\to}\limits^{y} C]$. Moreover, 
for any two classes $[A\mathop{\to}\limits^{x} B\mathop{\to}\limits^{y} C]$ and
$[A'\mathop{\to}\limits^{x'} B'\mathop{\to}\limits^{y'} C']$, we set \cite[Definition 2.8]{NP19}
\begin{center}
$[A\mathop{\to}\limits^{x} B\mathop{\to}\limits^{y} C]\oplus [A'\mathop{\to}\limits^{x'} B'\mathop{\to}\limits^{y'} C']:=[A\oplus A'\mathop{\to}\limits^{x\oplus x'} B\oplus B'\mathop{\to}\limits^{y\oplus
y'} C\oplus C'].$
\end{center}

\begin{definition}\cite[Definition 2.9]{NP19}\label{def 2.9}
Let $\mathfrak{s}$ be a correspondence which associates to each $\mathbb{E}$-extension $\delta\in \mathbb{E}(C, A)$ an equivalence class $\mathfrak{s}(\delta)=
[A\mathop{\to}\limits^{x} B\mathop{\to}\limits^{y} C]$. This $\mathfrak{s}$ is called a realization of $\mathbb{E}$ if it satisfies the following condition:

$(*)$ Let $\delta\in \mathbb{E}(C, A)$ and $\delta'\in \mathbb{E}(C', A')$ be any pair of $\mathbb{E}$-extensions, with $\mathfrak{s}(\delta)=[A\mathop{\to}\limits^{x} B\mathop{\to}\limits^{y} C]$ and
$\mathfrak{s}(\delta')=[A'\mathop{\to}\limits^{x'} B'\mathop{\to}\limits^{y'} C']$. Then, for any morphism 
$(a, c)\in \mathbb{E}\mbox{-}\Ext(\mcC)(\delta, \delta')$, there exists $b\in \mcC(B, B')$ which makes the 
following diagram commutative
\begin{equation}\label{eq: diag1}
\xymatrix{
A\ar[r]^{x}\ar[d]_{a} & B\ar[r]^{y}\ar[d]^{b} & C\ar[d]^{c}\\
A'\ar[r]_{x'} & B'\ar[r]_{y'} & C'.
}
\end{equation}
It is said that the sequence $A\mathop{\to}\limits^{x} B\mathop{\to}\limits^{y} C$ realizes $\delta$ if $\mathfrak{s}(\delta)=[A\mathop{\to}\limits^{x} B\mathop{\to}\limits^{y} C]$. We point out that this condition does not depend on the choices of the representatives of the equivalence classes. 
In the above situation, we say that \eqref{eq: diag1} (or the triplet $(a, b, c)$) realizes $(a, c)$.
\end{definition}

A realization $\mathfrak{s}$ of $\mathbb{E}$ is additive \cite[Definitions 2.8 and 2.10]{NP19} if it satisfies the following two conditions:
\begin{enumerate}
\item $\mathfrak{s}(0)=\left[A\mathop{\to}\limits^{\tiny{\left(
\begin{array}{c}
1\\ 0
\end{array}
\right)}} A\oplus C\mathop{\to}\limits^{(0\, 1)} C
\right]$ for any $A, C\in \mcC;$
\item $\mathfrak{s}(\delta\oplus \delta')=\mathfrak{s}(\delta)\oplus \mathfrak{s}(\delta')$, for
any $\mathbb{E}$-extensions $\delta$ and $\delta'$.
\end{enumerate}

\begin{definition}\cite[Definition 2.12]{NP19}
The pair $(\mathbb{E}, \mathfrak{s})$ is an external triangulation of $\mcC$ if it satisfies the following
conditions.
\begin{enumerate}
\item[(ET1)] $\mathbb{E}: \mcC^{op}\times \mcC\to \mathrm{Ab}$ is an additive bifunctor.
\item[(ET2)] $\mathfrak{s}$ is an additive realization of $\mathbb{E}$.
\item[(ET3)] Let $\delta\in \mathbb{E}(C, A)$ and $\delta'\in \mathbb{E}(C', A')$ be any pair of $\mathbb{E}$-extensions, realized as $\mathfrak{s}(\delta)=[A\mathop{\to}\limits^{x} B\mathop{\to}\limits^{y} C]$ and
$\mathfrak{s}(\delta')=[A'\mathop{\to}\limits^{x'} B'\mathop{\to}\limits^{y'} C']$. For any commutative square
in $\mcC$
\[
\xymatrix{
A\ar[r]^{x}\ar[d]_{a} & B\ar[r]^{y}\ar[d]^{b} & C\\
A'\ar[r]_{x'} & B'\ar[r]_{y'} & C',
}
\]
there exists a morphism $(a, c): \delta\to \delta'$ which is realized by $(a, b, c)$.
\item[(ET3)$^{op}$] Let $\delta\in \mathbb{E}(C, A)$ and $\delta'\in \mathbb{E}(C', A')$ be any pair of 
$\mathbb{E}$-extensions, realized by $A\mathop{\to}\limits^{x} B\mathop{\to}\limits^{y} C$ and 
$A'\mathop{\to}\limits^{x'} B'\mathop{\to}\limits^{y'} C'$, respectively. For any commutative square in $\mcC$
\[
\xymatrix{
A\ar[r]^{x} & B\ar[r]^{y}\ar[d]_{b} & C\ar[d]^{c}\\
A'\ar[r]_{x'} & B'\ar[r]_{y'} & C'
}
\]
there exists a morphism $(a, c): \delta\to \delta'$ which is realized by $(a, b, c)$.

\item[(ET4)] Let $(A, \delta, D)$ and $(B, \delta', F)$ be $\mathbb{E}$-extensions realized, respectively, by
$A\mathop{\to}\limits^{f} B\mathop{\to}\limits^{f'} D$ and $B\mathop{\to}\limits^{g} C\mathop{\to}\limits^{g'}
F$. Then there exist an object $E\in \mcC$, a commutative diagram
\[
\xymatrix{
A\ar@{=}[d]\ar[r]^{f} & B\ar[r]^{f'}\ar[d]_{g} & D\ar[d]^{d}\\
A\ar[r]_{h} & C\ar[r]_{h'}\ar[d]_{g'} & E\ar[d]^{e}\\
& F\ar@{=}[r] & F
}
\]
in $\mcC$, and an $\mathbb{E}$-extension $\delta''\in \mathbb{E}(E, A)$ realized by $A\mathop{\to}\limits^{h} C
\mathop{\to}\limits^{h'} E$, which satisfy $\mathfrak{s}(f'\cdot \delta ')=[D\mathop{\to}\limits^{d} E\mathop{\to}\limits^{e} F]$, $\delta''\cdot d=\delta$ and $f\cdot \delta''=\delta'\cdot e$.

\item[(ET4)$^{op}$] Let $(D, \delta, B)$ and $(F, \delta', C)$ be $\mathbb{E}$-extensions realized, respectively, by $D\mathop{\to}\limits^{f'} A\mathop{\to}\limits^{f} B$ and
$F\mathop{\to}\limits^{g'} B\mathop{\to}\limits^{g} C$, respectively. Then there
exist an object $E\in \mcC$, a commutative diagram
\[
\xymatrix{
D\ar@{=}[d]\ar[r]^{d} & E\ar[r]^{e}\ar[d]_{h'} & F\ar[d]^{g'}\\
D\ar[r]_{f'} & A\ar[r]_{f}\ar[d]_{h} & B\ar[d]^{g}\\
& C\ar@{=}[r] & C
}
\]
in $\mcC$ and an $\mathbb{E}$-extension $\delta''\in \mathbb{E}(C, E)$ realized by
$E\mathop{\to}\limits^{h'} A\mathop{\to}\limits^{h} C$ which satisfy $\mathfrak{s}(\delta\cdot  g')=[D\mathop{\to}\limits^{d} E\mathop{\to}\limits^{e} F]$, $\delta'=e\cdot \delta''$ and 
$d\cdot \delta=\delta''\cdot g$.
\end{enumerate}
If the above conditions hold true, we call $\mathfrak{s}$ an $\mathbb{E}$-triangulation of $\mcC$, and call the triplet
$(\mcC, \mathbb{E}, \mathfrak{s})$ an externally triangulated category, or for short, extriangulated 
category. Sometimes, for the sake of
simplicity, we only write $\mcC$ instead of $(\mcC, \mathbb{E}, \mathfrak{s}).$ 
\end{definition}

For a triplet $(\mcC, \mathbb{E}, \mathfrak{s})$ satisfying (ET1) and (ET2), we recall that \cite[Definition 2.15]{NP19}:
\begin{enumerate}
\item A sequence $A\mathop{\to}\limits^{x} B\mathop{\to}\limits^{y} C$ is called a $\mbE$-conflation if it
realizes some $\mathbb{E}$-extension $\delta\in \mathbb{E}(C, A)$.
\item A morphism $f\in\mcC(A, B)$ is called an $\mbE$-inflation if it admits some $\mbE$-conflation 
$A\mathop{\to}\limits^{f} B\to C$.
\item A morphism $f\in\mcC(A,B)$ is called a $\mbE$-deflation if it admits some $\mbE$-conflation $K\to A\mathop{\to}\limits^{f} B$.
\end{enumerate}

Recall from \cite[Definition 2.17]{NP19}, that  a subcategory $\mcD\subseteq \mcC$ with $\mathcal{C}$ extriangulated category is \emph{closed under extensions} if, for any conflation $A\to B\to C$ with $A, C\in \mcD$, we have $B\in \mcD$.
\\

Let $(\mcC, \mathbb{E}, \mathfrak{s})$ be a triplet satisfying (ET1) and (ET2). Then, by following \cite[Definition   2.19]{NP19}, we have that: 
\begin{enumerate}
\item If $A\mathop{\to}\limits^{x} B\mathop{\to}\limits^{y} C$ realizes $\delta\in \mathbb{E}(C, A)$, we call the pair $(A\mathop{\to}\limits^{x} B\mathop{\to}\limits^{y} C, \delta)$ an $\mathbb{E}$-triangle,
and we write it as
$\xymatrix{A\ar[r]^{x} & B\ar[r]^{y} & C\ar@{-->}[r]^{\delta} & }.$ Let us consider another $\mathbb{E}$-triangle $(A'\mathop{\to}\limits^{x'} B'\mathop{\to}\limits^{y'} C', \delta').$ Then, the fact that $\mathfrak{s}$ is an additive realization of $\mbE$ give us the $\mbE$-triangle
\[\xymatrix{A\oplus A'\ar[r]^{x\oplus x'} & B\oplus B'\ar[r]^{y\oplus y'} & C\ar@{-->}[r]^{\delta\oplus\delta'} & }.\]

\item Let $A\mathop{\to}\limits^{x} B\mathop{\to}\limits^{y} C\mathop{\dashrightarrow}\limits^{\delta}$ and
$A'\mathop{\to}\limits^{x'} B'\mathop{\to}\limits^{y'} C'\mathop{\dashrightarrow}\limits^{\delta'}$ be  $\mathbb{E}$-triangles. If a triplet $(a, b, c)$ realizes $(a, c): \delta\to \delta'$ as in Definition \ref{def 2.9}, then we write
it as
\[
\xymatrix{
A\ar[r]^{x}\ar[d]_{a} & B\ar[r]^{y}\ar[d]^{b} & C\ar@{-->}[r]^{\delta}\ar[d]^{c} &\\
A'\ar[r]_{x'} & B'\ar[r]_{y'} & C'\ar@{-->}[r]_{\delta'} &
}
\]
and we call $(a, b, c)$ a morphism of $\mathbb{E}$-triangles.
\end{enumerate}

Let $\mathbb{E}:\mcC^{op}\times\mcC\to Ab$ be an additive bifunctor.
By Yoneda's lemma and \cite[Definition\ 3.1]{NP19}, any $\mathbb{E}$-extension
$\delta\in \mathbb{E}(C, A)$ induces natural
transformations $\delta_{\#}: \mcC(-,C)\rightarrow \mathbb{E}(-,A)$ and 
$\delta^{\#}: \mcC(A, -)\rightarrow \mathbb{E}(C,-)$. For any $X\in \mcC$, these $(\delta_{\#})_{X}$ and $\delta^{\#}_{X}$ are given as follows
\begin{enumerate}
\item $(\delta_{\#})_{X}: \mcC(X, C)\to 
\mathbb{E}(X, A); f\mapsto \delta\cdot f;$
\item $\delta^{\#}_{X}: \mcC(A, X)\to \mathbb{E}(C, X); g\mapsto g\cdot \delta$.
\end{enumerate}
We abbreviately denote $(\delta_{\#})_{X}(f)$
and $\delta^{\#}_{X}(g)$ by $\delta_{\#}f$ and
$\delta^{\#}g$, respectively.

\begin{corollary}\cite[Corollary 3.12]{NP19}\label{suc exact ext1}
Let $\mcC$ be an extriangulated category. For any 
$\mathbb{E}$-triangle $A\mathop{\to}\limits^{x} B\mathop{\to}\limits^{y} C \mathop{\dashrightarrow}\limits^{\delta}$, we have the 
following exact sequences of additive functors
$$\mcC(C,-)\mathop{\longrightarrow}\limits^{\mcC(y,-)} \mcC(B,-)
\mathop{\longrightarrow}\limits^{\mcC(x,-)} \mcC(A,-)\mathop{\longrightarrow}\limits^{\delta^{\#}} \mathbb{E}(C,-)
\mathop{\longrightarrow}\limits^{\mathbb{E}(y,-)} \mathbb{E}(B,-)
\mathop{\longrightarrow}\limits^{\mathbb{E}(x,-)} \mathbb{E}(A,-),$$
$$\mcC(-,A)\mathop{\longrightarrow}\limits^{\mcC(-,x)} \mcC(-,B)
\mathop{\longrightarrow}\limits^{\mcC(-,y)} \mcC(-,C)\mathop{\longrightarrow}\limits^{\delta_{\#}} \mathbb{E}(-,A)
\mathop{\longrightarrow}\limits^{\mathbb{E}(-,x)} \mathbb{E}(-,B)
\mathop{\longrightarrow}\limits^{\mathbb{E}(-,y)} \mathbb{E}(-,C).$$
\end{corollary}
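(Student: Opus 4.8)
The statement to establish is the pair of long exact sequences of additive functors attached to an $\mathbb{E}$-triangle $A\xrightarrow{x} B\xrightarrow{y} C\dashrightarrow^{\delta}$, namely the exactness of
\[
\mcC(C,-)\xrightarrow{\mcC(y,-)} \mcC(B,-)\xrightarrow{\mcC(x,-)} \mcC(A,-)\xrightarrow{\delta^{\#}} \mathbb{E}(C,-)\xrightarrow{\mathbb{E}(y,-)} \mathbb{E}(B,-)\xrightarrow{\mathbb{E}(x,-)} \mathbb{E}(A,-)
\]
and its contravariant analogue. Since this is literally \cite[Cor.\ 3.12]{NP19}, the honest thing is to derive it from the material already recalled; the plan is to reduce everything to exactness at each spot, evaluated at a fixed object $X\in\mcC$, and to verify the six (resp.\ six) kernel/image identities one at a time using only (ET1)--(ET4), the realization axiom, and the definitions of $\delta_{\#}$, $\delta^{\#}$.

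\textbf{Key steps, covariant sequence.} First I would record exactness of $\mcC(C,X)\xrightarrow{\mcC(y,X)}\mcC(B,X)\xrightarrow{\mcC(x,X)}\mcC(A,X)$: the composite is zero because $yx$ factors through a conflation and $\mathfrak{s}(\delta)$ realizes $\delta$ (so $xy=0$ as morphisms, hence $\mcC(x,X)\circ\mcC(y,X)=0$); conversely, given $f\colon B\to X$ with $fx=0$, I would apply (ET3)$^{op}$ (or the weak-cokernel property of $y$ coming from the realization axiom $(*)$ together with the additive realization of $0$) to the square formed by $f$ and the split triangle on $X$, obtaining a factorization $f=gy$. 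Next, exactness at $\mcC(A,X)$: the composite $\delta^{\#}\circ\mcC(x,X)$ sends $g\colon B\to X$ to $(gx)\cdot\delta = g\cdot(x\cdot\delta)$, and $x\cdot\delta=0$ since $\mathfrak{s}(\delta)=[A\xrightarrow{x}B\xrightarrow{y}C]$ forces $x\cdot\delta$ to be split (this is the standard fact that the ``middle term'' kills the extension); for the reverse inclusion, if $h\colon A\to X$ satisfies $h\cdot\delta=0$, then $h\cdot\delta$ being split means the pushout triangle $X\to B'\to C\dashrightarrow^{h\cdot\delta}$ splits, and chasing the morphism of $\mathbb{E}$-triangles realizing $(h,1_C)$ produces $g\colon B\to X$ with $gx=h$. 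Then, exactness at $\mathbb{E}(C,X)$: $\mathbb{E}(y,X)\circ\delta^{\#}$ applied to $h$ gives $(h\cdot\delta)\cdot y = h\cdot(\delta\cdot y)$ and $\delta\cdot y$ is split because $A\xrightarrow{x}B\xrightarrow{y}C$ realizes $\delta$ and precomposition of $\delta$ with its own deflation vanishes; the reverse inclusion is the essential use of (ET4), pulling back $\delta$ along a suitable extension to realize the given class in $\mathbb{E}(C,X)$ as $h\cdot\delta$. Finally, exactness at $\mathbb{E}(B,X)$ is proved the same way with (ET4)$^{op}$.

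\textbf{Contravariant sequence and the main obstacle.} The second long exact sequence is obtained by the dual argument: one works in $\mcC^{op}$, noting that $(\mathcal{C}^{op},\mathbb{E}^{op},\mathfrak{s}^{op})$ is again extriangulated (the axioms (ET3)/(ET3)$^{op}$ and (ET4)/(ET4)$^{op}$ are interchanged under this duality), so the covariant statement applied there yields the contravariant one verbatim after translating $\delta^{\#}$ into $\delta_{\#}$. Concretely I would just state that the second sequence follows from the first by duality, to avoid repeating six more diagram chases. The step I expect to be the real obstacle is exactness at the two $\mathbb{E}$-spots, i.e.\ showing that every $\mathbb{E}$-extension in $\mathbb{E}(C,X)$ killed by $\mathbb{E}(y,X)$ is of the form $\delta^{\#}(h)=h\cdot\delta$ for some $h\colon A\to X$: this is exactly where one must invoke (ET4) to build the connecting object and the morphism of $\mathbb{E}$-triangles, and getting the compatibility equations $\delta''\cdot d=\delta$, $f\cdot\delta''=\delta'\cdot e$ to line up correctly with the chosen realizations is the delicate bookkeeping. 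Everything else is formal consequence of the Yoneda description of $\delta^{\#}$, $\delta_{\#}$ and the weak (co)kernel properties packaged in the realization axiom $(*)$.
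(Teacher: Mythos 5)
This corollary is imported verbatim from \cite[Cor.\ 3.12]{NP19}; the paper supplies no proof of its own, so the only meaningful comparison is with the source, and your outline is essentially the standard argument given there. The strategy is correct: the first three spots come from the weak (co)kernel properties encoded in the realization axiom together with (ET3)/(ET3)$^{op}$, the $\delta^{\#}$-spot from the splitting of $h\cdot\delta$ and a morphism of $\mbE$-triangles realizing $(h,1_C)$, the two $\mbE$-spots (which you rightly flag as the only delicate part) from the (ET4)-type axioms, and the contravariant sequence from the self-duality of the axiom system. Two cosmetic remarks: the vanishing composite is $y\circ x$, not $xy$; and at the $\mbE(C,-)$-spot one can actually avoid (ET4) altogether, since for $\theta\in\mbE(C,X)$ with $\theta\cdot y=0$ a realization $X\to W\xrightarrow{v}C$ of $\theta$ admits a lift $u\colon B\to W$ of $y$ through $v$ by the already-established exactness of $\mcC(B,W)\to\mcC(B,C)\to\mbE(B,X)$, and then (ET3)$^{op}$ yields $(h,1_C)\colon\delta\to\theta$ with $h\cdot\delta=\theta$; the full strength of (ET4)$^{op}$ is only needed at the $\mbE(B,-)$-spot.
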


\subsection*{Higher extensions} Let $\mcC$ be an extriangulated category. Following \cite{NP19}, we recall that an object $P\in \mcC$ is
$\mbE$-projective if for any $\mathbb{E}$-triangle $A\mathop{\to}\limits^{x} B
\mathop{\to}\limits^{y} C\mathop{\dashrightarrow}\limits^{\delta}$ the map
$$\mcC(P,y): \mcC(P,B)\longrightarrow \mcC(P,C)$$
is surjective. We denote by $\mathcal{P}(\mcC)$ the class of $\mbE$-projective objects in $\mcC$. Dually, the class of $\mbE$-injective objects in $\mcC$ is denoted by $\mcI(\mcC)$. We say that $\mcC$ has \emph{enough $\mbE$-projectives} if for any object $C\in \mcC$, there exists an 
$\mathbb{E}$-triangle $A\to P\to C\dashrightarrow$ with $P\in \mcP(\mcC)$. Dually, we can
define that $\mcC$ has \emph{enough $\mbE$-injectives}.
\

\begin{lemma}\cite[Proposition 3.24]{NP19}\label{lem: E-projective=projective}
Let $\mcC$ be an extriangulated category. An object $P\in \mcC$ is $\mbE$-projective in $\mcC$
if and only if $\mbE(P,C)=0$ for all $C\in \mcC$.
\end{lemma}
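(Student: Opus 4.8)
The plan is to derive both implications directly from the two exact sequences of Corollary~\ref{suc exact ext1}, using only the elementary identity $\delta\cdot 1_P=\delta$ (functoriality of $\mbE$). No further realization axioms are needed, so the argument is short; essentially all the care goes into choosing the right sequence — the one built from $\mcC(P,-)$, i.e.\ the \emph{second} sequence of Corollary~\ref{suc exact ext1} — and evaluating it at the object $P$.

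For ``$P$ $\mbE$-projective $\Rightarrow$ $\mbE(P,C)=0$ for all $C$'', I would fix $C\in\mcC$ and an arbitrary $\delta\in\mbE(P,C)$ and show $\delta=0$. Realize $\delta$ by an $\mbE$-triangle $C\mathop{\to}\limits^{x}B\mathop{\to}\limits^{y}P\mathop{\dashrightarrow}\limits^{\delta}$, noting that $P$ sits in the \emph{last} slot precisely because $\delta\in\mbE(P,C)$. Feeding this $\mbE$-triangle into the second exact sequence of Corollary~\ref{suc exact ext1} and evaluating at $P$ yields an exact sequence
\[
\mcC(P,B)\xrightarrow{\mcC(P,y)}\mcC(P,P)\xrightarrow{(\delta_{\#})_{P}}\mbE(P,C),
\]
in which $(\delta_{\#})_{P}(f)=\delta\cdot f$. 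Applying the definition of $\mbE$-projectivity to this very $\mbE$-triangle gives that $\mcC(P,y)$ is surjective, hence $(\delta_{\#})_{P}=0$; evaluating at $1_P$ gives $\delta=\delta\cdot 1_P=(\delta_{\#})_{P}(1_P)=0$. Since $C$ and $\delta$ were arbitrary, $\mbE(P,-)=0$.

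For the converse I would take $\mbE(P,C)=0$ for every $C$ and let $A\mathop{\to}\limits^{x}B\mathop{\to}\limits^{y}C\mathop{\dashrightarrow}\limits^{\delta}$ be an arbitrary $\mbE$-triangle; evaluating the second exact sequence of Corollary~\ref{suc exact ext1} at $P$ gives exactness of
\[
\mcC(P,B)\xrightarrow{\mcC(P,y)}\mcC(P,C)\xrightarrow{(\delta_{\#})_{P}}\mbE(P,A),
\]
and since $\mbE(P,A)=0$ by hypothesis, $\mcC(P,y)$ is surjective, which is exactly the defining property of an $\mbE$-projective object.

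I do not expect a genuine obstacle: the only place one could slip is the bookkeeping of variance — one must use $\mcC(P,-)$ rather than $\mcC(-,P)$, hence the second rather than the first sequence of Corollary~\ref{suc exact ext1} — together with the observation that placing $P$ in the terminal position of the realizing $\mbE$-triangle is what makes the connecting map $(\delta_{\#})_{P}$ land in $\mbE(P,C)$ and carry $1_P$ to $\delta$. Everything else is formal. (This is \cite[Prop.~3.24]{NP19}.)
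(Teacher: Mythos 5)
Your argument is correct: both directions follow from the second exact sequence of Corollary~\ref{suc exact ext1} evaluated at $P$, with the key observation that $(\delta_{\#})_{P}(1_P)=\delta$ recovers the extension class from the connecting map. The paper itself gives no proof, citing \cite[Prop.~3.24]{NP19} instead, and your reasoning is precisely the standard argument behind that citation.
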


\noindent Given $\mcX, \mcY\subseteq \mcC$ classes of objects in an extriangulated category $\mcC$, we recall 
the following from \cite[Definition 4.2]{NP19}:
\begin{enumerate}
\item[$\bullet$] $C\in \mcC$ belongs to $\mathrm{Cone}(\mcX, \mcY)$ if $C$ admits a 
conflation $X\to Y\to C$ with $X\in \mcX, Y\in \mcY$.

\item[$\bullet$] $C\in \mcC$ belongs to $\mathrm{CoCone}(\mcX, \mcY)$ if $C$ admits a 
conflation $C\to X\to Y$ with $X\in \mcX, Y\in \mcY$.

\item[$\bullet$] $\mcX$ is \emph{closed under cones} if
$\mathrm{Cone}(\mcX, \mcX)\subseteq \mcX$.
Dually, $\mcX$ is \emph{closed
under cocones} if $\mathrm{CoCone}(\mcX, \mcX)\subseteq \mcX$.
\end{enumerate}

We set $\Omega\mcX:=\mathrm{CoCone}(\mcP(\mcC), \mcX)$, that is, $\Omega \mcX$ is
the subclass of $\mcC$ consisting of the objects $\Omega X$ admitting an $\mathbb{E}$-triangle
$\Omega X\to P\to X\dashrightarrow$
with $P\in\mcP(\mcC)$ and $X\in \mcX$. We call $\Omega\mcX$ the syzygy class of $\mcX$ in $\mcC$. We set $\Omega^{0}\mcX:=\mcX$, and define $\Omega^{k}\mcX$ for $k>0$
inductively by
$\Omega^{k}\mcX:=\Omega(\Omega^{k-1}\mcX)$ which is the $k$-th syzygy class of $\mcX$. Dually, 
the cosyzygy class of $\mcX$ is $\Sigma\mcX:=\mathrm{Cone}(\mcX, \mcI(\mcC))$ and $\Sigma^{k}\mcX$ is the $k$-th cosyzygy class of $\mcX,$ for $k\geq 0$  
(see \cite[Definition 4.2 and Proposition 4.3]{LNheartsoftwin},
for more details).\\

\noindent Let $\mcC$ be an extriangulated category with enough $\mbE$-projectives and $\mbE$-injectives.
In \cite{LNheartsoftwin}, it is shown that $\mathbb{E}(X, \Sigma^{k}Y)\cong \mathbb{E}(\Omega^{k}X, Y)$ 
 for $k\geq 0.$ Thus, the higher extension groups are defined as 
 \begin{equation}\label{def: higher ext}
     \mathbb{E}^{k+1}(X, Y):=\mathbb{E}(X, \Sigma^{k}Y)\cong \mathbb{E}(\Omega^{k}X, Y),
 \end{equation}
  for $k\geq 0$. 
 Moreover, the following result is also proven.

\begin{lemma}\cite[Proposition 5.2]{LNheartsoftwin} \label{longExSeq}
Let $\mcC$ be an extriangulated category with enough $\mbE$-projectives and $\mbE$-injectives and $A\to B\to C\dashrightarrow$ be an $\mathbb{E}$-triangle in $\mcC$. Then, for any object $X\in \mcC$ and $k\geq 1$, we have the following
exact sequences 
\[(1)\;
\mathbb{E}^{k}(X, A)\to \mathbb{E}^{k}(X, B)\to \mathbb{E}^{k}(X, C)\to \mathbb{E}^{k+1}(X, A)\to \mathbb{E}^{k+1}(X, B)\to \cdots,
\]
\[(2)\;
\mathbb{E}^{k}(C, X)\to \mathbb{E}^{k}(B, X)\to \mathbb{E}^{k}(A, X)\to \mathbb{E}^{k+1}(C, X)\to \mathbb{E}^{k+1}(B, X)\to \cdots
\]
of abelian groups.
\end{lemma}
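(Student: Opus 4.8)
The plan is to derive both sequences from the six-term exact sequences of Corollary~\ref{suc exact ext1} by dimension shifting, staying entirely inside the $\mathbb{E}$-extension calculus (bilinearity of $\mathbb{E}$ and the operations $a\cdot\delta$, $\delta\cdot c$) together with the characterization of $\mbE$-projectives in Lemma~\ref{lem: E-projective=projective}; no further use of the extriangulation axioms is needed beyond what already enters Corollary~\ref{suc exact ext1}. I will carry out $(1)$ in detail; $(2)$ is the mirror image, obtained by replacing $\mbE$-projectives, syzygies $\Omega$ and Lemma~\ref{lem: E-projective=projective} with $\mbE$-injectives, cosyzygies $\Sigma$ and the dual statement $\mathbb{E}(-,I)=0$ for $I$ $\mbE$-injective.

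By the definition of the higher extension groups, $\mathbb{E}^{\,i+1}(X,-)=\mathbb{E}(\Omega^{i}X,-)$ as functors on $\mcC$ for every $i\ge0$. So it suffices to produce, for the given $\mathbb{E}$-triangle $A\xrightarrow{a}B\xrightarrow{b}C\dashrightarrow$ with extension $\delta\in\mathbb{E}(C,A)$ and for every object $W$, a map $\partial_{W}\colon\mathbb{E}(W,C)\to\mathbb{E}(\Omega W,A)$ fitting into an exact sequence
\[
\begin{array}{c}
\mcC(W,A)\to\mcC(W,B)\to\mcC(W,C)\xrightarrow{\,\delta_{\#}\,}\mathbb{E}(W,A)\to\mathbb{E}(W,B)\to\mathbb{E}(W,C)\\[2pt]
\xrightarrow{\ \partial_{W}\ }\mathbb{E}(\Omega W,A)\to\mathbb{E}(\Omega W,B)\to\mathbb{E}(\Omega W,C),
\end{array}
\]
whose first six terms and last three terms are Corollary~\ref{suc exact ext1} evaluated at $W$ and at $\Omega W$, respectively. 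Granting this, one splices the sequence for $W=X$ with the ones for $W=\Omega X,\ \Omega^{2}X,\dots$ and re-indexes via $\mathbb{E}(\Omega^{i}X,-)=\mathbb{E}^{\,i+1}(X,-)$: the outcome is precisely $(1)$, for every $k\ge1$, since $(1)$ at level $k$ is the tail of the spliced sequence from $\mathbb{E}^{k}(X,A)$ onward. Thus the whole problem reduces to constructing $\partial_{W}$ and checking exactness at the two new spots $\mathbb{E}(W,C)$ and $\mathbb{E}(\Omega W,A)$.

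To build $\partial_{W}$ I would use that $\mcC$ has enough $\mbE$-projectives to fix an $\mathbb{E}$-triangle $\Omega W\xrightarrow{\iota}P\xrightarrow{\pi}W\dashrightarrow$ with $P\in\mcP_{\mbE}(\mcC)$ and extension $\rho\in\mathbb{E}(W,\Omega W)$. Feeding this triangle into Corollary~\ref{suc exact ext1} and invoking $\mathbb{E}(P,-)=0$ (Lemma~\ref{lem: E-projective=projective}) shows that $\rho^{\#}\colon\mcC(\Omega W,-)\to\mathbb{E}(W,-)$ is an epimorphism; hence every $\xi\in\mathbb{E}(W,C)$ has the form $\xi=g\cdot\rho$ with $g\in\mcC(\Omega W,C)$, and I set $\partial_{W}(\xi):=\delta\cdot g$. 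The one genuinely delicate point is independence of the choice of $g$: if $\rho^{\#}(g-g')=0$, exactness in Corollary~\ref{suc exact ext1} gives $g-g'=h\iota$ for some $h\in\mcC(P,C)$, and then $\delta\cdot(g-g')=(\delta\cdot h)\cdot\iota=0$ since $\delta\cdot h\in\mathbb{E}(P,A)=0$; this is exactly where $\mbE$-projectivity of $P$ is used. Exactness at $\mathbb{E}(W,C)$ and at $\mathbb{E}(\Omega W,A)$ is then a short chase: writing elements as $g\cdot\rho$, using the two exact sequences of Corollary~\ref{suc exact ext1} for $A\to B\to C\dashrightarrow$ at $W$ and at $\Omega W$, and using the identities $\delta\cdot b=0$ and $a\cdot\delta=0$ (both immediate from Corollary~\ref{suc exact ext1}) together with the associativity of $\cdot$, one verifies both inclusions in each case.

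I do not expect a real obstacle: once one commits to the displayed nine-term sequence and never leaves the $\mathbb{E}$-calculus, each verification is a short computation. The two points deserving care are (i) well-definedness of $\partial_{W}$, the only step that essentially needs $P\in\mcP_{\mbE}(\mcC)$; and (ii) legitimacy of the splicing, namely that the natural isomorphisms $\mathbb{E}^{\,i+1}(X,-)\cong\mathbb{E}(\Omega^{i}X,-)$ of \cite{LNheartsoftwin} send $\partial_{\Omega^{i}X}$ to maps $\mathbb{E}^{\,i+1}(X,C)\to\mathbb{E}^{\,i+2}(X,A)$ and agree on the overlapping terms with the Corollary~\ref{suc exact ext1} maps, so that the spliced sequence is exact at every vertex. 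For $(2)$ one argues contravariantly: via $\mathbb{E}^{\,i+1}(-,X)=\mathbb{E}(-,\Sigma^{i}X)$ it suffices to produce a connecting map $\mathbb{E}(A,W)\to\mathbb{E}(C,\Sigma W)$ inside a nine-term sequence built from the first exact sequence of Corollary~\ref{suc exact ext1}; construct it from an $\mathbb{E}$-triangle $W\to I\to\Sigma W\dashrightarrow$ with $I$ $\mbE$-injective, use $\mathbb{E}(-,I)=0$ for well-definedness, and splice.
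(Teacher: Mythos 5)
The paper does not prove this lemma; it is imported verbatim from \cite[Prop.~5.2]{LNheartsoftwin}, so there is no in-paper argument to compare against. Your reconstruction is correct and is essentially the standard proof given in that reference: dimension-shift the two six-term sequences of Corollary~\ref{suc exact ext1} along a projective presentation $\Omega W\to P\to W\dashrightarrow$ (resp.\ an injective copresentation), define the connecting map by $g\cdot\rho\mapsto\delta\cdot g$, use $\mathbb{E}(P,-)=0$ for well-definedness, verify exactness at the two new spots by the $\mathbb{E}$-extension calculus, and splice via the natural isomorphisms $\mathbb{E}^{i+1}(X,-)\cong\mathbb{E}(\Omega^{i}X,-)$ — all of which you identify, including the two genuinely delicate points.
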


Let $\mcC$ be an extriangulated category with 
enough $\mbE$-projectives and $\mbE$-injectives. We fix the following notation for $\mcX, \mcY\subseteq \mcC$ and $k \geq 1$.
\begin{itemize}
\item $\mathbb{E}^{k}(\mathcal{X, Y})=0$ if 
$\mathbb{E}^{k}(X, Y)=0$ for every $X\in \mcX$ and $Y\in \mcY$. When $\mcX=\{M\}$ or $\mcY=\{N\}$, 
we shall write $\mathbb{E}^{k}(M, \mcY)=0$ and
$\mathbb{E}^{k}(\mcX, N)=0$, respectively.

\item $\mathbb{E}^{\leq k}(\mcX, \mcY)=0$ if
$\mathbb{E}^{j}(\mcX, \mcY)=0$ for every $1\leq j\leq k$.

\item $\mathbb{E}^{\geq k}(\mcX, \mcY)=0$ if 
$\mathbb{E}^j(\mcX, \mcY)=0$ for every $j\geq k$.
\end{itemize}

 Recall that the \emph{right $k$-th orthogonal complement} and the \emph{right orthogonal complement of $\mcX$} are defined, respectively, by 
\begin{align*}
\mcX^{\perp_k} := \{ N \in \mcC \mbox{ : } \mathbb{E}^k(\mcX,N) = 0 \}\;\text{ and }\; \mcX^\perp := \bigcap_{k \geq 1} \mcX^{\perp_k}=\{N\in \mcC : \mathbb{E}^{\geq 1}(\mcX, N)=0\}.
\end{align*}
 Dually, we have the \emph{left $k$-th} and the \emph{left orthogonal complements ${}^{\perp_k}\mcX$ and ${}^{\perp}\mcX$ of $\mcX$}, respectively. 

\subsection*{Quotient categories}
Let $\mcC$ be an additive category and 
$I\subseteq \Mor(\mcC)$ be an ideal of $\mcC$. The \emph{quotient of $\mcC$ by $I$}, denoted by
$\mcC/I$,
 is the category 
whose objects, morphisms and composition are defined by:

\begin{itemize}
\item[$\bullet$] Objects: $\Obj(\mcC/I) := \Obj(\mcC)$.  

\item[$\bullet$] Morphisms: for each pair $(A,B) \in \Obj(\mcC)\times \Obj(\mcC)$, 
	\[
	\Hom_{\mcC/I}(A,B) := \frac{\Hom_{\mcC}(A,B)}{I(A,B)}
	\]
	
\item[$\bullet$] Composition of morphisms in $\mcC/I$:
	$$\begin{array}{ccc}
\Hom_{\mcC/I}(B, C)\times \Hom_{\mcC/I}(A, B) &
\longrightarrow & \Hom_{\mcC/I}(A, C)\\
{}\\
(g+I(B,C), f+I(A, B)) & \mapsto & gf+I(A, C)
\end{array}
$$
\end{itemize}
We
denote
by $\pi_{I}: \mcC\longrightarrow \mcC/I$
the canonical projection given by 
$\pi_{I}(M):=M$, for all $M\in \Obj(\mcC)$ and 
$\pi_{I}(f):=f+I(A, B)$, for any morphism $f:A\to B\in \Mor(\mcC)$. Notice that $\pi_{I}:\mcC\to \mcC/I$ is a full and
essentially surjective functor. Given a subcategory $\mcY\subseteq \mcC$, we denote by $[\mcY]$
the class of morphisms in $\mcC$ such that factor through
an object in $\mcY$. Notice that, in case $\free(\mcY)=\mcY$, $[\mcY]$ is an ideal of $\mcC$.

\section{\textbf{Quotient categories with extriangulated structure}}\label{sec: quotients}

We begin this section with the following well-known result.

\begin{proposition}\cite[Proposition 3.30]{NP19}\label{estruct del cociente}
Let $(\mcC, \mbE, \mathfrak{s})$ be an extriangulated category and $J=\free (J)$. If $J\subseteq \mcP(\mcC)\cap \mcI(\mcC)$ then the 
quotient category $\mcC_J:=\mcC/[J]$ has an extriangulated structure $(\mcC_J, \mbE_{\mcC_J}, \mathfrak{s}_{\mcC_J})$ induced from the one of 
$\mcC$.
\end{proposition}

Moreover, the extriangulated structure described in \cite[Proposition 3.30]{NP19} is given as follows.\\

Let $\mcC_J:=\mcC/[J]$ and $\pi_J:\mcC\to \mcC_J$ be the corresponding canonical projection. Then,

\begin{enumerate}
    \item[$(I)$]  For any $M, N\in \Obj(\mcC)$ and any $f, g\in \Mor(\mcC)$, 
\begin{equation}\label{eq: estruct del cociente}
\mbE_{\mcC_J}(M, N):=\mbE(M, N) \quad \mbox{ and } \quad \mbE_{\mcC_J}((\pi_J(f))^{op}, \pi_J(g) ):=\mbE(f^{op}, g).
\end{equation}

\item[$(II)$] For any $\delta\in \mbE_{\mcC_J}(C, A)$, 
$$\mathfrak{s}_{\mcC_J}(\delta):=
[A\mathop{\longrightarrow}\limits^{\pi_J(x)} B \mathop{\longrightarrow}\limits^{\pi_J(y)} C],$$
where $\mathfrak{s}(\delta)=[A\mathop{\longrightarrow}\limits^{x} B\mathop{\longrightarrow}\limits^{y} C]$.
\end{enumerate}

\begin{notation}
For any $\delta\in \mbE_{\mcC_J}(C, A)=\mbE(C, A)$, we use the notation $\pi_J (\delta)$ to distinguish $\delta$ as element of $\mbE_{\mcC_J}(C, A)$ and we only write $\delta$ when we consider $\delta$
as element in $\mbE(C, A)$. 
Given 
$\pi_J(\delta)\in \mbE_{\mcC_J}(C, A)$, we write   
$$A\mathop{\longrightarrow}\limits^{\pi_J(x)} B\mathop{\longrightarrow}\limits^{\pi_J(y)} C\mathop{\dashrightarrow}\limits^{\pi_J(\delta)}$$ to denote its corresponding 
$\mbE_{\mcC_J}$-triangle in $\mcC_J$.
\end{notation}

Below we describe the existing relation between left and right operations $\cdot$ of 
$\mbE$ in $\mcC$ and whose of $\mbE_{\mcC_J}$ in $\mcC_J$.

\begin{lemma}\label{lem: pi y cdot}
Let $\mcC$ be an extriangulated category and   
$\free (J)=J\subseteq \mcC$ satisfying $J\subseteq \mcP(\mcC)\cap \mcI(\mcC)$. Let $\mcC_J:=\mcC/[J]$ and 
$\pi_J:\mcC\to \mcC_J$ be the corresponding canonical
projection. Then, $$\pi_J(f)\cdot \pi_J(\delta)\cdot \pi_J(g)=\pi_J (f\cdot \delta\cdot g),$$
for
any $\mbE$-triangle $A\to B\to C\mathop{\dashrightarrow}\limits^{\delta}$ and for
any morphisms $f:A\to A'$ and $g:C'\to C$ in $\mcC$. 
\end{lemma}

\begin{proof}
Let $f:A\to A'$ and $g:C'\to C$ be two morphisms in $\mcC$.
Then, by \eqref{eq: estruct del cociente}, we have
\[
\begin{array}{rl}
\pi_J(f)\cdot \pi_J(\delta)\cdot \pi_J(g)= & \mbE_{\mcC_J}((\pi_J(g))^{op}, \pi_J(f))(\pi_J(\delta))\\
= & \mbE(g^{op}, f)(\delta)\\
= & f\cdot \delta\cdot g.
\end{array}
\]
It means that $\pi_J(f)\cdot \pi_J(\delta)\cdot \pi_J(g)$
seen as element of $\mathbb{E}_{\mcC_J}(C', A')$ corresponds to 
$f\cdot \delta\cdot g$ as element of $\mbE(C', A')$.
Hence, the equality 
$\pi_J(f\cdot \delta\cdot g)=\pi_J(f)\cdot \pi_J(\delta)\cdot \pi_J(g)$ holds.
\end{proof}

The following proposition shows us that the property of having enough $\mbE$-projectives and $\mbE$-injectives is preserved and  therefore higher extension groups as well.

\begin{proposition}\label{prop: quot C suf proj}
Let $\mcC$ be an extriangulated category with enough 
$\mbE$-projectives and $\mbE$-injectives, and 
$\free (J)=J\subseteq \mcC$ satisfying $J\subseteq \mcP(\mcC)\cap \mcI(\mcC)$. Then, the following statements 
hold true, for $\mcC_J:=\mcC/[J]$ and $\pi_J:\mcC\to \mcC_J$ being the corresponding canonical projection.
\begin{enumerate}[(a)]
\item $\mcC_J$ has enough 
$\mbE_J$-projectives and
$\mbE_J$-injectives. Moreover,
$\pi_J(\mcP(\mcC))= \mcP(\mcC_J)$ and $\pi_J(\mcI(\mcC))= \mcI(\mcC_J)$.

\item $\mbE^{i}_{\mcC_J}(M,N)\cong \mbE^{i}(M, N)$,
for any $M, N\in \mcC$ and for any $i\geq 1$.
\end{enumerate}
\end{proposition}

\begin{proof}
(a) From \eqref{eq: estruct del cociente}, Lemma~\ref{lem: E-projective=projective} and its dual it is clear
that the equalities
\[
\pi_J(\mcP(\mcC))=\mcP(\mcC_J) \quad \mbox{ and }\quad 
\pi_J(\mcI(\mcC))=\mcI(\mcC_J)
\]
hold true. Now, let $M\in \Obj(\mcC_J)=\Obj(\mcC)$. Since
$\mcC$ has enough $\mbE$-projectives, there exists an $\mbE$-triangle
$\Omega M\mathop{\longrightarrow}\limits^{f} P_0\mathop{\longrightarrow}\limits^{g} M\mathop{\dashrightarrow}\limits^{\delta}$ in $\mcC$ with $P_0\in \mcP(\mcC)$. So,
\begin{equation}\label{E-tr en C cociente}
\Omega M\mathop{\longrightarrow}\limits^{\pi_J(f)} P_0\mathop{\longrightarrow}\limits^{\pi_J (g)}M\mathop{\dashrightarrow}\limits^{\pi_J(\delta)}
\end{equation}
is 
an $\mbE_{\mcC_J}$-triangle in $\mcC_J$ with $P_0=\pi_J(P_0)\in \pi_J(\mcP(\mcC))=\mcP(\mcC_J)$. This shows that 
$\mcC_J$ has enough $\mbE_{\mcC_J}$-projectives. Dually,
one can prove that 
$\pi_J(\mcI(\mcC))=\mcI(\mcC_J)$ and $\mcC_J$ has
enough $\mbE_{\mcC_J}$-injectives.\\

(b) The case $i=1$ is clear due 
to~\eqref{eq: estruct del cociente}. So, we can assume that $i\geq 2$.

Let $M, N\in 
\Obj(\mcC_J)=\Obj(\mcC)$. 
By using that $\mcC$ has enough
$\mbE$-projectives we get an $\mbE_{\mcC_J}$-triangle in $\mcC_J$ as in~\eqref{E-tr en C cociente} with $P_0\in \mcP(\mcC_J)$. Thus, 
it follows that 
\[
\mbE^{2}_{\mcC_J}(M,N)\cong \mbE_{\mcC_J}(\Omega M, N)
\]
Inductively, for every
$i\geq 1$, we have 
\begin{equation}\label{eq1}
\mbE_{\mcC_J}^{i+1}(M, N)\cong \mbE_{\mcC_J} (\Omega^{i} M, N)
\end{equation}
Hence, by (a), we get 
\[
\mbE^{i+1}_{\mcC_J}(M, N)\cong \mbE_{\mcC_J}(\Omega^{i} M, N)=
 \mbE(\Omega^{i} M, N)\cong \mbE^{i+1}(M, N).
\]
\end{proof}

In the sequel, we write 
$\mcY^{\perp_{i, J}}$ and
${}^{\perp_{i, J}}\mcY$ to 
denote, respectively, the right and the left $i$th-orthogonal complement  
for any subcategory $\mcY\subseteq \mcC_J$ and any
$i\geq 1$. We also write $\pi_J(\mcY)\subseteq \mcC_J$ to denote the essential image of $\mcY\subseteq \mcC$ under $\pi_J$, 
that is, 
\[
\pi_J(\mcY):=\{N\in \mcC_J : N\simeq \pi_J(Y) 
\mbox{ in } \mcC_J \mbox{ for some } Y\in \mcY\}.
\]

\begin{lemma}\label{lem: pi restr}
Let $\mcC$ be an extriangulated category with enough
$\mbE$-projectives and $\mbE$-injectives, and $\free (J)=J\subseteq \mcC$
 satisfying $J\subseteq \mcP(\mcC)\cap \mcI(\mcC)$. 
 Let $\mcC_J:=\mcC/[J]$ and 
$\pi_J:\mcC\to \mcC_J$ be the corresponding canonical
projection. Then, the following statements hold for any $n\geq 1$ and any $\mcX\subseteq
\mcC$:
\begin{enumerate}[(a)]
\item The equality
$\pi_J(\mcX^{\perp_{\leq n+1}})=(\pi_J(\mcX))^{\perp_{\leq n+1, J}}$ holds true. In particular, $\pi_J(\mcX^{\perp_{\leq n+1}})$ is an 
extriangulated category.

\item The restriction 
$$\widetilde{\pi_J}: \mcX^{\perp_{\leq n+1}}\to \pi_J(\mcX^{\perp_{\leq n+1}}),\qquad f\mapsto \pi_J(f),$$ is full and essentially surjective.
\end{enumerate}
\end{lemma}

\begin{proof}
(a) Let $\mcX\subseteq \mcC$ and $X\in \mcX$. Consider $M\in \mcC_J$ such
that $M\simeq \pi_J(M'):=M'$ for some $M'\in \mcC$. By 
Proposition~\ref{prop: quot C suf proj}, for any $i\in [1,n]$, we have
$$\mbE_{\mcC_J}^{i}(X, M)\cong 
\mbE_{\mcC_J}^{i}(X, M')\cong
\mbE^{i}(X, M').
$$
Thus, $M\in (\pi_J(\mcX))^{\perp_{\leq n+1, J}}$ if and only if $M\simeq \pi_J(M')$ for some 
$M'\in \mcX^{\perp_{\leq n+1}}$. Finally, since $(\pi_J(\mcX))^{\perp_{\leq n+1, J}}$ is closed under 
extensions in $\mcC_J$, so is 
$\pi_J(\mcX^{\perp_{\leq n+1}})$ and then
it is an extriangulated category 
(see~\cite[Corollary 3.12]{NP19}).

(b) is clear due to $\mcX$ is a full subcategory and $\pi_J$ is
full and essentially surjective.
\end{proof}

\subsection*{Equivalences between extriangulated categories}

We continue this section with a particular class of functors between extriangulated categories. Namely, the so-called \emph{exact functors} firstly appeared in \cite{bennetttransport}. Below we recall this definition from \cite{Ogawa}.

\begin{definition}\cite[Definition 2.11 (1)]{Ogawa}\label{def: iso entre extrian}
Let $(\mcC, \mbE, \mathfrak{s})$ and $(\mcD, \mathbb{F}, \mathfrak{t})$ be extriangulated categories. An \textbf{exact functor} $(G, \phi): (\mcC, \mbE, \mathfrak{s})\to (\mcD, \mathbb{F}, \mathfrak{t})$ is a pair of an additive functor 
$G:\mcC\to \mcD$ and a natural transformation $\phi: \mbE\Rightarrow \mathbb{F}\circ (G^{op}\times G)$ such that 
\[\begin{tikzcd}[column sep=2.25em]
	G(A) & G(B) & G(C) & {{}}
	\arrow["{G(x)}", from=1-1, to=1-2]
	\arrow["{G(y)}", from=1-2, to=1-3]
	\arrow["{\phi_{C, A}(\delta)}", dashed, from=1-3, to=1-4]
\end{tikzcd}\]
is an $\mathbb{F}$-triangle in $\mcD$, for any $\mbE$-triangle $A\mathop{\longrightarrow}\limits^{x} B\mathop{\longrightarrow}\limits^{y} C\mathop{\dashrightarrow}\limits^{\delta}$ in $\mcC$.
\end{definition}

The following outcome allows us to say when an 
exact functor between extriangulated categories is,
indeed, an equivalence of extriangulated categories (for more details, see \cite[Definitions 2.11 (2) \& (3) and Remark 2.12]{Ogawa}).

\begin{proposition}\cite[Proposition 2.13]{NOSlocalization}\label{pro: eq extr}
Let $(\mcC, \mbE, \mathfrak{s})$, $(\mcD, \mathbb{F}, \mathfrak{t})$ be extriangulated categories and
$(G, \phi): (\mcC, \mbE, \mathfrak{s})\to (\mcD, \mathbb{F}, \mathfrak{t})$ be an exact functor. Then, 
$(G, \phi)$ is an
equivalence of extriangulated categories if and only if
$G$ is an equivalence of categories and $\phi$ is a natural 
isomorphism.
\end{proposition}

\begin{remark}\label{rmk: equiv extriang}
Two extriangulated categories $(\mcC, \mbE, \mathfrak{s})$ and $(\mcD, \mathbb{F}, \mathfrak{t})$ are equivalent as extriangulated categories if and only if:
\begin{enumerate}[(a)]
\item There exists an equivalence of additive categories $G:\mcC\to \mcD$, and
\item There exists a natural isomorphism $\phi:\mbE \Rightarrow \mathbb{F}\circ (G^{op}\times G)$ such that
\[\begin{tikzcd}[column sep=2.25em]
	G(A) & G(B) & G(C) & {{}}
	\arrow["{G(x)}", from=1-1, to=1-2]
	\arrow["{G(y)}", from=1-2, to=1-3]
	\arrow["{\phi_{C, A}(\delta)}", dashed, from=1-3, to=1-4]
\end{tikzcd}\]
is an $\mathbb{F}$-triangle in $\mcD$, for any $\mbE$-triangle $A\mathop{\longrightarrow}\limits^{x} B\mathop{\longrightarrow}\limits^{y} C\mathop{\dashrightarrow}\limits^{\delta}$ in $\mcC$.
\end{enumerate}
\end{remark}

We finish giving the main result of this section which consists in proving that certain quotient categories coming from $(n+2)$-rigid categories 
($\mcX\subseteq \mcC$ is $(n+2)$-rigid if 
$\mbE^{j}(\mcX, \mcX)=0$ for all $j\in [1, n+1]$) are equivalent. It is 
worth mentioning that there exists a previous treatment about this 
in \cite{FMP23} where the authors 
introduce \emph{tautological functors} to describe
quotient categories of $2$-rigid and functorially finite subcategories in 
Frobenius extriangulated categories \cite[Definition 4.1 and Theorem 4.16]{FMP23}. In this work 
we address the analogue of this equivalence
in the sense of 
Remark~\ref{rmk: equiv extriang}.

\begin{theorem}\label{teo: equiv extriang}
Let $\mcC$ be an extriangulated category with enough
$\mbE$-projectives and $\mbE$-injectives, and 
$\free(J)=J\subseteq \mcC$ satisfying 
$J\subseteq \mcP(\mcC)\cap \mcI(\mcC)$. 
Let $\mcC_J:=\mcC/[J]$ and 
$\pi_J:\mcC\to \mcC_J$ be the corresponding canonical
projection. If $n\geq 1$ and $\free(\mcX)=\mcX$ is an $(n+2)$-rigid subcategory of $\mcC$
such that $\mcX^{\perp_{\leq n+1}}={}^{\perp_{\leq n+1}}\mcX$ then, there exists an equivalence of extriangulated categories $$(G, \phi):\mcX^{\perp_{\leq n+1}}/ [\add(\mcX\cup J)]\longrightarrow
\pi_J(\mcX^{\perp_{\leq n+1}})/[\add(\pi_J(\mcX))].$$ 
\end{theorem}

\begin{proof} 
Notice first that any object in $\add(\mcX\cup J)$ is projective-injective in $\mcX^{\perp_{\leq n+1}}$ and  any object in $\add (\pi_J(\mcX))$ is projective-injective in $\pi_J(\mcX^{\perp_{\leq n+1}})$ by Proposition~\ref{prop: quot C suf proj}. 
Thus, the corresponding quotient categories
$$\mathcal{L}:=\mcX^{\perp_{\leq n+1}}/ [\add(\mcX\cup J)]\quad \mbox{ and }\quad \mathcal{R}:=\pi_J(\mcX^{\perp_{\leq n+1}})/[\add(\pi_J(\mcX))],$$ 
are extriangulated categories (see Proposition~\ref{estruct del cociente}).
In the 
sequel, we will also denote 
them by $(\mathcal{L}, \mbE_{\mathcal{L}}, \mathfrak{s}_{\mathcal{L}})$ and $(\mathcal{R}, \mbE_{\mathcal{R}}, \mathfrak{s}_{\mathcal{R}})$,
respectively, and we will write $\rho:\pi_J(\mcX^{\perp_{\leq n+1}})\to
\mathcal{R}$ and $\nu: \mcX^{\perp_{\leq n+1}}\to
\mathcal{L}$ to denote the corresponding canonical projections of each one.\\

(1) We first prove that there is an equivalence of
additive categories $G:\mathcal{L}\longrightarrow
\mathcal{R}.$\\

Let us consider the following map
(see Lemma~\ref{lem: pi restr}):
\[
\begin{array}{ll}
\pi: \mcX^{\perp_{\leq n+1}}\longrightarrow
\pi_J(\mcX^{\perp_{\leq n+1}}),\quad & (A\mathop{\longrightarrow}\limits^{f} B)\mapsto
(A\mathop{\longrightarrow}\limits^{\pi_J(f)} B)
\end{array}
\]
Since 
$\add(\mcX\cup J)\subseteq \Ker (\rho\pi)$, by universal property of $\nu$, there is a functor 
$G:\mathcal{L}\longrightarrow \mathcal{R}$ such that
$G\nu=\rho\pi$. That is, we have the following commutative diagram
\[\begin{tikzcd}[row sep=3.15em]
	\mcX^{\perp_{\leq n+1}} && \pi_J(\mcX^{\perp_{\leq n+1}}) \\
	\mathcal{L} && \mathcal{R}
	\arrow["\pi", from=1-1, to=1-3]
	\arrow["\rho", from=1-3, to=2-3]
	\arrow["\nu"', from=1-1, to=2-1]
	\arrow["G"', dashed, from=2-1, to=2-3]
\end{tikzcd}\]
Moreover, $G$
is full and essentially surjective due to $\rho$, $\pi$ and $\nu$ are (see Lemma~\ref{lem: pi restr}).
Thus, to prove that $G$ is an equivalence, it is suffices to show $\Ker(\rho\pi)\subseteq [\add(\mcX\cup J)]$.

Indeed, let $h:M\to N \in \Ker(\rho\pi)$. Then, $(\rho\pi)(h)=0$, \emph{i.e.,} $\pi (h)$ factors through an object 
$\pi (X)$ with $X\in \mcX$. By 
using that
$\pi$ is full, we know that there exist morphisms
$f:M\to X$ and $f':X\to N$ with $X\in \mcX$ such
that the following diagram commutes
\[\begin{tikzcd}
	{\pi (M)} && {\pi(N)} \\
	& {\pi (X)}
	\arrow[""{name=0, anchor=center, inner sep=0}, "{\pi (h)}", from=1-1, to=1-3]
	\arrow["{\pi (f)}"', from=1-1, to=2-2]
	\arrow["{\pi (f')}"', from=2-2, to=1-3]
\end{tikzcd}\]
That is, $\pi (f'f)=\pi (f') \pi (f)=\pi (h)$. From this equality, we have that there exist 
morphisms $g:M\to Q$ and $g':Q\to N$ in 
$\mcC$ with $Q\in J$ such that
the following diagram commutes
\[\begin{tikzcd}
	M && N \\
	& Q
	\arrow["{h-f'f}", from=1-1, to=1-3]
	\arrow["{g}"', from=1-1, to=2-2]
	\arrow["{g'}"', from=2-2, to=1-3]
\end{tikzcd}\]
that is, $h=f'f+g'g$. Thus,
$h$ can be rewritten as 
$h=\left( f'\, g'\right)\tiny{\left(
\begin{array}{c}
f\\
g
\end{array}
\right)}$. 

\[
\xymatrix{
M\ar[rr]^{h}\ar[dr]_{\tiny \left(
\begin{array}{c}
f\\ g
\end{array}
\right)} & & N\\
& X\oplus Q\ar[ur]_{(f'\, g')} &
}
\]
and so
$h\in [\add(\mcX\cup J)]$. Therefore,
$G: \mathcal{L}\longrightarrow \mathcal{R}$
is an equivalence of categories.\\

(2) For any $(C, A)\in \mathcal{L}^{op}\times \mathcal{L}$, we define $\phi_{C, A}:\mbE_\mathcal{L}(C, A)\rightarrow \mbE_{\mathcal{R}}(C, A)$ as the composition of the following identities (see Proposition~\ref{estruct del cociente})
\[\begin{tikzcd}[sep=tiny]
	\mbE_{\mathcal{L}}(C,A) && \mbE_{\mcX^{\perp_{\leq n+1}}}(C,A) && \mbE_{\pi(\mcX^{\perp_{\leq n+1}})}(C,A) && \mbE_{\mathcal{R}}(C,A) \\
	{\nu(\delta)} & \mapsto & \delta & \mapsto & {\pi(\delta)} & \mapsto & {\rho\pi(\delta)}
	\arrow["{\mathrm{1d}}", from=1-1, to=1-3]
	\arrow["{\mathrm{1d}}", from=1-3, to=1-5]
	\arrow["{\mathrm{1d}}", from=1-5, to=1-7]
\end{tikzcd}\]
It is clear that $\phi_{C, A}$ is an isomorphism. Thus, it 
remains to prove that $$\phi: \mbE_{\mathcal{L}}\Rightarrow \mbE_{\mathcal{R}}\circ (G^{op}\times G)$$ given by
 $$\phi:=\{\phi_{C,A}:\mbE_{\mathcal{L}}(C,A)\mathop{\rightarrow} \mbE_{\mathcal{R}}(C,A)
 \}_{(C,A)\in \mathcal{L}^{op}\times \mathcal{L}}$$
 is a natural transformation.\\
 
 Let $f:A\to A'$ and $g:C'\to C$ be morphisms in $\mcX^{\perp_{\leq n+1}}$. We see that the following diagram commutes

\begin{equation}\label{diag iso nat}
\begin{tikzcd}[row sep=3.15em]
	\mbE_{\mathcal{L}}(C, A) &&&& \mbE_{\mathcal{L}}(C', A') \\
	\mbE_{\mathcal{R}}(C, A) &&&& \mbE_{\mathcal{R}}(C', A')
	\arrow["{\mbE_{\mathcal{L}}((\nu (g))^{op}, \nu (f))}", from=1-1, to=1-5]
	\arrow["{\phi_{C, A}}"', from=1-1, to=2-1]
	\arrow["{\phi_{C',A'}}", from=1-5, to=2-5]
	\arrow["{\mbE_{\mathcal{R}}((G\nu (g))^{op}, G\nu (f))}"', from=2-1, to=2-5]
\end{tikzcd}
\end{equation}
$\,$\\

In fact, let $\nu (\delta)\in \mbE_{\mathcal{L}}(C, A)$. On the one hand, from Lemma~\ref{lem: pi y cdot}, we get:

\[
\begin{array}{rl}
[\phi_{C', A'}\circ \mbE_{\mathcal{L}}((\nu (g))^{op}), \nu (f))](\nu (\delta))= & \phi_{C', A'}(\nu (f)\cdot \nu (\delta)\cdot \nu (g))\\
= & \phi_{C', A'}(\nu (f\cdot \delta \cdot g))\\
= & \rho\pi(f\cdot \delta\cdot g).
\end{array}
\]

On the other hand, from Lemma~\ref{lem: pi y cdot} again and
the equality $G\nu=\rho\pi$ we have:
\[
\begin{array}{rl}
[\mbE_{\mathcal{R}}((G\nu (g))^{op}, G\nu (f))\circ 
\phi_{C, A}](\nu (\delta)) = & \mbE_{\mathcal{R}}(\rho\pi(g)^{op}, \rho\pi(f))(\phi_{C, A}(\nu (\delta)))\\
= & \mbE_{\mathcal{R}}(\rho\pi(g)^{op}, \rho\pi(f))(\rho\pi(\delta))\\
= & \rho\pi (f) \cdot \rho\pi (\delta)\cdot \rho\pi (g)\\
= & \rho\pi(f\cdot \delta\cdot g).
\end{array}
\]
Therefore~\eqref{diag iso nat}
commutes.\\

(3) Finally, we see that for any $\nu (\delta)\in \mbE_\mathcal{L}(C, A)$ with
$A\mathop{\longrightarrow}\limits^{\nu (x)} B\mathop{\longrightarrow}\limits^{\nu (y)} C\mathop{\dashrightarrow}\limits^{\nu (\delta)}$, we have that
\[\begin{tikzcd}[column sep=2.5em]
	A & B & C & {{}}
	\arrow["{G\nu(x)}", from=1-1, to=1-2]
	\arrow["{G\nu(y)}", from=1-2, to=1-3]
	\arrow["{\phi_{C, A}(\nu(\delta))}", dashed, from=1-3, to=1-4]
\end{tikzcd}\]
is an $\mbE_\mathcal{R}$-triangle in $\mathcal{R}$.\\

Indeed, let $\nu (\delta)\in \mbE_\mathcal{L}(C, A)$. Since $\nu(\delta)$ is identified with $\delta\in \mbE(C, A)$ one can consider its realization in $\mcC$, we say $\mathfrak{s}(\delta)=[A\mathop{\longrightarrow}\limits^{x} B\mathop{\longrightarrow}\limits^{y} C].$ Thus, from Proposition~\ref{estruct del cociente}, it follows 
that $\mathfrak{s}_\mathcal{L}(\nu (\delta))=[A\mathop{\longrightarrow}\limits^{\nu (x)} B\mathop{\longrightarrow}\limits^{\nu (y)} C]$ 
is the corresponding realization of $\nu (\delta)$ in $\mathcal{L}$ and $\mathfrak{s}_{\pi(\mcX^{\perp_{\leq n+1}})} (\pi(\delta))=[A\mathop{\longrightarrow}\limits^{\pi (x)} B\mathop{\longrightarrow}\limits^{\pi (y)} C]$
is the corresponding realization of $\pi(\delta)$
in $\pi(\mcX^{\perp_{\leq n+1}}).$

Thus, from Proposition~\ref{estruct del cociente} again, we also have
$$[A\mathop{\longrightarrow}\limits^{\rho\pi(x)} B\mathop{\longrightarrow}\limits^{\rho\pi(y)} C]=\mathfrak{s}_\mathcal{R}(\rho\pi(\delta))=\mathfrak{s}_\mathcal{R}(\phi_{C, A}(\nu (\delta))).$$

So, by using that $G\nu=\rho\pi$, we get the equality
$$\mathfrak{s}_\mathcal{R}(\phi_{C, A}(\nu (\delta)))=
[A\mathop{\longrightarrow}\limits^{G\nu(x)} B\mathop{\longrightarrow}\limits^{G\nu(y)} C]$$
holds true. Therefore, we can conclude $(G, \phi):\mathcal{L}\to \mathcal{R}$ is an equivalence of extriangulated categories by Remark~\ref{rmk: equiv extriang}.
\end{proof}

\section{\textbf{Examples coming from reduction process}}
\label{sec: examples}

In this section we provide several examples of the 
equivalence given in Theorem~\ref{teo: equiv extriang}. To do that, it is quite natural to think 
under which settings the equality $\mcX^{\perp_{\leq n+1}}={}^{\perp_{\leq n+1}}\mcX$ holds true for
$(n+2)$-rigid subcategories. A partial answer can be
gotten through the \emph{reduction process}.

The reduction process for rigid subcategories in extriangulated categories has been addressed for
example in \cite{IYmutation, FMP23}. In these works, it was proven that both quotients in Theorem~\ref{teo: equiv extriang} are triangulated and, indeed, there exists an equivalence of triangulated categories. Recently, this concept was extended in \cite{huerta2024reduction} for $(n+2)$-rigid and functorially finite subcategories (without the condition of being Frobenius).
In this section, we will work with the quotient category 
determined by the ideal of projective-injective objects in such reduction in order to get new examples. We begin recalling the 
following definition.

\begin{definition}\cite[Definition 3.1]{huerta2024reduction}\label{def: mi reduction}
    Let $\mcC$ be an extriangulated category with enough
    $\mbE$-projectives and $\mbE$-injectives, and $\mcX\subseteq \mcC$ be $(n+2)$-rigid and functorially finite in $\mcC$ such that $\mcX^{\perp_{\leq n+1}}={}^{\perp_{\leq n+1}}\mcX$.
    The reduction of $\mcC$ at $\mcX$, denoted by $\mathcal{R}_\mcC^{n+1}(\mcX)$, is defined as
    $$\mathcal{R}_\mcC^{n+1}(\mcX):=\mcX^{\perp_{\leq n+1}}={}^{\perp_{\leq n+1}}\mcX.$$
\end{definition}

In 
\cite[Corollary 3.3]{huerta2024reduction}
 it is proven that the class of projective-injective objects in this new category coincides with
$$\mathcal{I}:=\add(\mcX\cup \mathcal{P}(\mcC))\cap \add(\mcX\cup \mathcal{I}(\mcC))$$
and, from \cite [Proposition 3.30]{NP19}, the quotient $\mathcal{R}_\mcC^{n+1}(\mcX)/\mathcal{I}$ is an
extriangulated category.

On the other hand, by applying  Theorem~\ref{teo: equiv extriang}, we get another extriangulated category, namely, 
$\mathcal{R}_\mcC^{n+1}(\mcX)/[\add(\mcX\cup J)]$ where
$J:=\mathcal{P}(\mcC)\cap \mathcal{I}(\mcC)$. Notice that $$\add(\mcX\cup J)\subseteq \add(\mcX\cup \mcP(\mcC))\cap \add(\mcX\cup \mcI(\mcC))$$ and there are some cases where the equality holds (for example, when
$\mcP(\mcC)=\mcI(\mcC)$). The following lemma shows that the equality remains valid when $\mcC$ is  Krull-Schmidt.\\

Let $\mcC$ be an additive category. We recall that $\mcC$ is {\bf Krull-Schmidt} if each object in $\mcC$ decomposes into a finite coproduct of objects having local endomorphisms ring. In
particular, every summand of this decomposition is an indecomposable object in
$\mcC$. We denote by $\mathrm{ind}(\mcC)$ the full subcategory of $\mcC$ whose objects are determined by choosing one object for each iso-class of indecomposable objects in $\mcC$ (for more details, see
\cite{HK15}). 

\begin{lemma}\label{lem: KS}
Let $\mcC$ be a Krull-Schmidt extriangulated category with 
enough $\mbE$-projectives and $\mbE$-injectives,
$\add(\mcY)=\mcY\subseteq \mcC$ and $J:=\mcP(\mcC)\cap \mcI(\mcC)$. Then, the equality 
$$\add(\mcY\cup J)=
\add(\mcY\cup \mcP(\mcC))\cap \add(\mcY\cup \mcI(\mcC))$$
holds true.
\end{lemma}

\begin{proof}
On the one hand, since $M\in \add(\mcY\cup \mcP(\mcC))\cap \add(\mcY \cup 
\mcI(\mcC))$, there exist $Z, Z'\in \mcC$ such that
$M\oplus Z=Y\oplus P$ and $M\oplus Z'=Y'\oplus I$ where
$Y, Y'\in \mcY$, $P\in \mcP(\mcC)$ and $I\in \mcI(\mcC)$. Thus, since $\mcC$ is Krull-Schmidt, there
are decompositions
$$M=\bigoplus_{i=1}^{n}M_i,\, Y=\bigoplus_{j=1}^{m}Y_j,\,
Y'=\bigoplus_{k=1}^{t}Y'_k,\, P=\bigoplus_{l=1}^{r}P_l\,
\mbox{ and }\, I=\bigoplus_{u=1}^{s}I_u$$
with
$M_i, Y_j, Y'_k, 
P_l, I_u\in \mathrm{ind} (\mcC)$ and $\forall\, i,j,k,l,u$. Now, since $$M_i\mid Y\oplus P=\left(\bigoplus_{j=1}^{m}Y_j\right)\oplus \left( 
\bigoplus_{l=1}^{r}P_l\right)$$ we have that $M_i\in \mcY$
or $M_i\in \mcP(\mcC)$. Dually, by using that
$$M_i\mid Y'\oplus I=\left(\bigoplus_{k=1}^{t}Y'_k\right)\oplus 
\left(\bigoplus_{u=1}^{s}I_u\right)$$
we get $M_i\in \mcY$ or $M_i\in \mcI(\mcC)$.
Then, $M_i\in \mcY$ or $M_i\in J$ for every
$i\in [1,n]$ and so
$M\in \add(\mcY\cup J)$. Finally, the containment $\subseteq$ is clear and hence,
we get the equality.
\end{proof}

We finish this section by computing $\mathcal{R}_\mcC^{n+1}(\mcX)/[\add(\mcX\cup J)]$ in several examples on Krull-Schmidt extriangulated categories. To do that, the following 
result from \cite{HMSS2023quotient} will be used.

\begin{proposition}\cite[Proposition 2.9]{HMSS2023quotient}\label{pro: KS}
    For an additive category $\mcC$, $\mcX\subseteq \mcY\subseteq \mcC$ such that 
    $\mcX=\add (\mcX)$ and $\mcY=\free (\mcY)$ and the quotient category $\mcD:=\mcY/\mcX$, if $\mcY$ is Krull-Schmidt then $\mcD$ is Krull-Schmidt and 
    $\ind (\mcD)=\ind (\mcY)\setminus \mcX$.
\end{proposition}

In the examples below, recall that an extriangulated 
category $\mcC$ is \emph{Frobenius} if $\mcC$ has 
enough $\mbE$-projectives and $\mbE$-injectives, 
and $\mathcal{P}(\mcC)=\mathcal{I}(\mcC)$.

\begin{example}\label{ex3} Let 
$k$ be an algebraically closed field and $\Lambda$ be the path algebra over $k$ given by the quiver
\[\tiny{\begin{tikzcd}
	\bullet && \bullet && \bullet && \bullet && \bullet && \bullet
	\arrow["x", from=1-1, to=1-3]
	\arrow["x", from=1-3, to=1-5]
	\arrow["x", from=1-5, to=1-7]
	\arrow["x", from=1-7, to=1-9]
	\arrow["x", from=1-9, to=1-11]
\end{tikzcd}}\]
with relation $x^{3}=0$. Then, the Auslander-Reiten quiver of $\modu(\Lambda)$ is given by 
the following diagram
\[\begin{tikzcd}[sep=small]
	&& \blacklozenge && \blacklozenge && \bullet && \bullet \\
	& \diamondsuit && \clubsuit && \circ && \bullet && \bullet \\
	\diamondsuit && \clubsuit && \clubsuit && \circ && \bullet && \heartsuit
	\arrow[from=3-1, to=2-2]
	\arrow[from=2-2, to=1-3]
	\arrow[from=1-3, to=2-4]
	\arrow[from=2-4, to=3-5]
	\arrow[from=3-3, to=2-4]
	\arrow[from=2-4, to=1-5]
	\arrow[from=1-5, to=2-6]
	\arrow[from=2-6, to=3-7]
	\arrow[from=3-5, to=2-6]
	\arrow[from=2-6, to=1-7]
	\arrow[from=1-7, to=2-8]
	\arrow[from=2-8, to=3-9]
	\arrow[from=3-7, to=2-8]
	\arrow[from=2-8, to=1-9]
	\arrow[from=1-9, to=2-10]
	\arrow[from=2-10, to=3-11]
	\arrow[from=3-9, to=2-10]
	\arrow[from=2-2, to=3-3]
	\arrow[dotted, no head, from=2-2, to=2-4]
	\arrow[dotted, no head, from=2-4, to=2-6]
	\arrow[dotted, no head, from=2-6, to=2-8]
	\arrow[dotted, no head, from=2-8, to=2-10]
	\arrow[dotted, no head, from=3-1, to=3-3]
	\arrow[dotted, no head, from=3-3, to=3-5]
	\arrow[dotted, no head, from=3-5, to=3-7]
	\arrow[dotted, no head, from=3-7, to=3-9]
	\arrow[dotted, no head, from=3-9, to=3-11]
\end{tikzcd}\]
Consider the following subcategories of $\modu (\Lambda)$:
\begin{enumerate}[(a)]
\item $\mcX=\add(\mcX)$ whose indecomposable objects are denoted by $\diamondsuit$, $\blacklozenge$ and $\circ$;
\item $\mcB_1=\add(\mcB_1)$ whose indecomposable objects are denoted by $\diamondsuit$, $\blacklozenge$, $\circ$ and $\clubsuit$;
\item $\mcB_2=\add(\mcB_2)$ whose indecomposable objects are denoted by $\heartsuit$; and
\item $\mcC:=\add(\mcB_1\cup \mcB_2)$.
\end{enumerate}
Since $\mcC$ is closed under extensions in 
$\modu (\Lambda)$, 
$\mcC$ admits an extriangulated structure 
(see~\cite[Corollary 3.12]{NP19}). Moreover, this is given by:\footnote{We use another extriangulated structure in comparison with the
given one in \cite[Example 4.2]{LZ20arising}.}

\begin{enumerate}
\item[$\bullet$] $\mbE(X, Y)=\Ext^{1}_{\modu (\Lambda)}(X, Y)$, for any $X, Y\in \mcB_1$.
\item[$\bullet$] $\mbE(X, Y)=0=\mbE(Y, X)$, for any $X\in \mcB_1$ and $Y\in \mcB_2$.
\item[$\bullet$] $\mbE(X, Y)=0$, for any $X, Y\in \mcB_2$.
\end{enumerate}

On the other hand, $\mcC$ has enough $\mbE$-projectives 
and $\mbE$-injectives where 
$$\mcP(\mcC)=\add(
\diamondsuit \cup \blacklozenge \cup \heartsuit) \quad \mbox{ and }\quad \mcI(\mcC)=\add(
\blacklozenge \cup \circ \cup \heartsuit).$$
Thus, $\mcC$ is not Frobenius.

On the other hand, concerning $\mcX$, we know
that: $\mcX$ is $2$-rigid, functorially finite in $\mcC$ and the equality  
$\mcX^{\perp_1}={}^{\perp_1}\mcX=\add(\mcX\cup \heartsuit)$ holds true. Furthermore, since
$$\mathcal{P}(\mcC)\subseteq \add(\mcX\cup \mathcal{I}(\mcC))\quad  \mbox{ and }\quad  
\mathcal{I}(\mcC)\subseteq \add(\mcX\cup \mathcal{P}(\mcC))$$ it follows by \cite[Corollary 3.9]{huerta2024reduction} that $\mathcal{R}^{1}_\mcC(\mcX):=\mcX^{\perp_1}$ is Frobenius. Thus, 
from Lemma~\ref{lem: KS},
$J':=\add(\mcX\cup \heartsuit)$ and so by Theorem~\ref{teo: equiv extriang} and Proposition~\ref{pro: KS} we get an equivalence of extriangulated categories $$\mathcal{R}^{1}_\mcC(\mcX)/[J']\cong \pi(\mcX^{\perp_1})/[\add(\pi(\mcX))]=\{0\}.$$
\end{example}

\begin{example}\label{ex4}
Let $\Lambda$ be the self-injective Nakayama algebra associated to the following quiver
\[\begin{tikzcd}[sep=small]
	&& \bullet & \bullet & \bullet & \bullet \\
	& \bullet &&&&& \bullet \\
	\bullet &&&&& \bullet \\
	& \bullet & \bullet & \bullet & \bullet
	\arrow["x"', from=1-6, to=1-5]
	\arrow["x"', from=1-5, to=1-4]
	\arrow["x"', from=1-4, to=1-3]
	\arrow["x"', from=1-3, to=2-2]
	\arrow["x"', from=2-2, to=3-1]
	\arrow["x"', from=3-1, to=4-2]
	\arrow["x"', from=4-2, to=4-3]
	\arrow["x"', from=4-3, to=4-4]
	\arrow["x"', from=4-4, to=4-5]
	\arrow["x"', from=4-5, to=3-6]
	\arrow["x"', from=3-6, to=2-7]
	\arrow["x"', from=2-7, to=1-6]
\end{tikzcd}\]
with relation $x^{4}=0$. Then, the Auslander-Reiten quiver of the stable category of 
$\modu (\Lambda)$, denoted by $\underline{\modu}(\Lambda)$, is described as follows\\

\adjustbox{scale=0.64,center}{
\begin{tikzcd}[column sep=tiny,row sep=huge]
	\times && \clubsuit && \bullet && \bullet && \bullet && \bullet && \bullet && \spadesuit && \times && \times && \times && \times && \times \\
	{{}} & \clubsuit && \bullet && \bullet && \bullet && \bullet && \bullet && \bullet && \spadesuit && \times && \times && \times && \times & {{}} \\
	\clubsuit && \bullet && \bullet && \bullet && \bullet && \bullet && \bullet && \bullet && \spadesuit && \times && \bigstar && \times && \clubsuit
	\arrow[dashed, no head, from=2-1, to=2-2]
	\arrow[dashed, no head, from=2-24, to=2-25]
	\arrow[dashed, no head, from=1-1, to=1-3]
	\arrow[dashed, no head, from=1-3, to=1-5]
	\arrow[dashed, no head, from=1-5, to=1-7]
	\arrow[dashed, no head, from=1-7, to=1-9]
	\arrow[dashed, no head, from=1-9, to=1-11]
	\arrow[dashed, no head, from=1-11, to=1-13]
	\arrow[dashed, no head, from=1-13, to=1-15]
	\arrow[dashed, no head, from=1-15, to=1-17]
	\arrow[dashed, no head, from=1-17, to=1-19]
	\arrow[dashed, no head, from=1-19, to=1-21]
	\arrow[dashed, no head, from=1-21, to=1-23]
	\arrow[dashed, no head, from=1-23, to=1-25]
	\arrow[dashed, no head, from=2-24, to=2-22]
	\arrow[dashed, no head, from=3-25, to=3-23]
	\arrow[dashed, no head, from=2-22, to=2-20]
	\arrow[dashed, no head, from=3-23, to=3-21]
	\arrow[dashed, no head, from=2-20, to=2-18]
	\arrow[dashed, no head, from=3-21, to=3-19]
	\arrow[dashed, no head, from=2-18, to=2-16]
	\arrow[dashed, no head, from=3-19, to=3-17]
	\arrow[dashed, no head, from=2-16, to=2-14]
	\arrow[dashed, no head, from=3-17, to=3-15]
	\arrow[dashed, no head, from=3-15, to=3-13]
	\arrow[dashed, no head, from=3-13, to=3-11]
	\arrow[dashed, no head, from=2-14, to=2-12]
	\arrow[dashed, no head, from=2-12, to=2-10]
	\arrow[dashed, no head, from=2-2, to=2-4]
	\arrow[dashed, no head, from=2-4, to=2-6]
	\arrow[dashed, no head, from=2-6, to=2-8]
	\arrow[dashed, no head, from=2-8, to=2-10]
	\arrow[dashed, no head, from=3-1, to=3-3]
	\arrow[dashed, no head, from=3-3, to=3-5]
	\arrow[dashed, no head, from=3-5, to=3-7]
	\arrow[dashed, no head, from=3-7, to=3-9]
	\arrow[dashed, no head, from=3-9, to=3-11]
	\arrow[from=1-1, to=2-2]
	\arrow[from=2-2, to=3-3]
	\arrow[from=3-1, to=2-2]
	\arrow[from=2-2, to=1-3]
	\arrow[from=3-3, to=2-4]
	\arrow[from=2-4, to=1-5]
	\arrow[from=1-3, to=2-4]
	\arrow[from=2-4, to=3-5]
	\arrow[from=3-5, to=2-6]
	\arrow[from=1-5, to=2-6]
	\arrow[from=2-6, to=3-7]
	\arrow[from=2-6, to=1-7]
	\arrow[from=1-7, to=2-8]
	\arrow[from=2-8, to=3-9]
	\arrow[from=3-7, to=2-8]
	\arrow[from=2-8, to=1-9]
	\arrow[from=1-9, to=2-10]
	\arrow[from=2-10, to=3-11]
	\arrow[from=3-9, to=2-10]
	\arrow[from=2-10, to=1-11]
	\arrow[from=1-11, to=2-12]
	\arrow[from=2-12, to=3-13]
	\arrow[from=3-11, to=2-12]
	\arrow[from=2-12, to=1-13]
	\arrow[from=1-13, to=2-14]
	\arrow[from=2-14, to=3-15]
	\arrow[from=3-13, to=2-14]
	\arrow[from=2-14, to=1-15]
	\arrow[from=1-15, to=2-16]
	\arrow[from=3-15, to=2-16]
	\arrow[from=2-16, to=3-17]
	\arrow[from=2-16, to=1-17]
	\arrow[from=1-17, to=2-18]
	\arrow[from=2-18, to=3-19]
	\arrow[from=3-17, to=2-18]
	\arrow[from=2-18, to=1-19]
	\arrow[from=1-19, to=2-20]
	\arrow[from=2-20, to=3-21]
	\arrow[from=3-19, to=2-20]
	\arrow[from=2-20, to=1-21]
	\arrow[from=1-21, to=2-22]
	\arrow[from=2-22, to=3-23]
	\arrow[from=3-21, to=2-22]
	\arrow[from=2-22, to=1-23]
	\arrow[from=1-23, to=2-24]
	\arrow[from=2-24, to=3-25]
	\arrow[from=3-23, to=2-24]
	\arrow[from=2-24, to=1-25]
\end{tikzcd}
}
$\,$\\
where the first and the last column are identified. Moreover, $\underline{\modu}(\Lambda)$ is triangulated by \cite[Corollary 7.4]{NP19}.\\ 

Consider the following subcategories of 
$\underline{\modu}(\Lambda)$:

\begin{enumerate}[(a)]
\item $\mcX=\add(\mcX)$ whose indecomposable objects are marked by $\clubsuit$ and $\spadesuit$; 
\item $\mcB_1=\add(\mcB_1)$ whose indecomposable objects are marked by $\clubsuit, \bullet$ and
$\spadesuit$;
\item $\mcB_2=\add(\mcB_2)$ whose indecomposable objects are marked by $\bigstar$; and
\item $\mcC=\add(\mcB_1\cup \mcB_2)$.
\end{enumerate}

Since $\mcC$ is closed under extensions, $\mcC$ has an extriangulated structure from~\cite[Corollary 3.12]{NP19}. Furthermore, this structure is given by:

\begin{enumerate}
\item[$\bullet$] $\mbE(X, Y)=\Hom_{\underline{\modu}(\Lambda)}(X,Y[1])$, for any $X, Y\in \mcB_1$.
\item[$\bullet$] $\mbE(X, Y)=0=\mbE(Y, X)$, for any $X\in \mcB_1$ and
$Y\in \mcB_2$.
\item[$\bullet$] $\mbE(X, Y)=0$, for any $X, Y\in \mcB_2$.
\end{enumerate} 

In addition, $\mcC$ has enough $\mbE$-projectives and $\mbE$-injectives where
$$\mcP(\mcC)=\add(\clubsuit \cup \bigstar)
\quad \mbox{ and }\quad \mcI(\mcC)=\add(\spadesuit\cup \bigstar).$$
Thus, $\mcC$ is not Frobenius.

Now, concerning $\mcX$, we have that:
$\mcX$ is $4$-rigid, functorially finite in $\mcC$ and the equality $\mcX^{\perp_{\leq 3}}={}^{\perp_{\leq 3}}\mcX=\add(\mcX\cup \bigstar)$ holds true. Therefore, the reduction of $\mcC$ at $\mcX$ is $$\mathcal{R}^{3}_\mcC(\mcX)=\add(\mcX\cup \bigstar).$$ 

On the other hand,
since $\mathcal{P}(\mcC)\subseteq \add(\mcX\cup \mathcal{I}(\mcC))$ and
$\mathcal{I}(\mcC)\subseteq \add (\mcX\cup \mathcal{P}(\mcC))$ we have that 
$\mathcal{R}_\mcC^{3}(\mcX)$ is Frobenius 
(see \cite[Corollary 3.9]{huerta2024reduction}).
So, by using Lemma~\ref{lem: KS}, we have that $J'=\add(\mcX\cup \bigstar)$ and then, by Theorem~\ref{teo: equiv extriang} and Proposition~\ref{pro: KS}, there is
an equivalence of extriangulated categories
$$\mathcal{R}_\mcC^{3}(\mcX)/[J']=\pi(\mcX^{\perp_{\leq 3}})/[\pi(\add(\mcX))]=\{0\}.$$
\end{example}

\begin{example}\label{ex2}
\cite[Example 4.1]{LZ20arising} Let $\Lambda$ be the self-injective Nakayama algebra associated to the following quiver 
\[\tiny{\begin{tikzcd}
	&& \circ \\
	\circ &&&& \circ \\
	& {} && {} \\
	& \circ && \circ
	\arrow["x"', from=2-1, to=4-2]
	\arrow["x"', from=4-2, to=4-4]
	\arrow["x"', from=4-4, to=2-5]
	\arrow["x"', from=2-5, to=1-3]
	\arrow["x"', from=1-3, to=2-1]
\end{tikzcd}}\]
with relation $x^{3}=0$. Thus, the Auslander-Reiten quiver of
$\mcC:=\modu(\Lambda)$ is given by
\[\tiny{\begin{tikzcd}
	\bullet && \bullet && \bullet && \bullet && \bullet && \bullet \\
{} & \bullet && \bullet && \bullet && \bullet && \bullet & {} \\
	\bullet && \bullet && \bullet && \bullet && \bullet && \bullet
	\arrow[from=1-1, to=2-2]
	\arrow[from=2-2, to=3-3]
	\arrow[from=3-1, to=2-2]
	\arrow[from=2-2, to=1-3]
	\arrow[from=1-3, to=2-4]
	\arrow[from=2-4, to=3-5]
	\arrow[from=3-3, to=2-4]
	\arrow[from=2-4, to=1-5]
	\arrow[from=1-5, to=2-6]
	\arrow[from=2-6, to=3-7]
	\arrow[from=3-5, to=2-6]
	\arrow[from=2-6, to=1-7]
	\arrow[from=1-7, to=2-8]
	\arrow[from=2-8, to=3-9]
	\arrow[from=3-7, to=2-8]
	\arrow[from=2-8, to=1-9]
	\arrow[from=1-9, to=2-10]
	\arrow[from=2-10, to=3-11]
	\arrow[from=3-9, to=2-10]
	\arrow[from=2-10, to=1-11]
	\arrow[dotted, no head, from=3-1, to=3-3]
	\arrow[dotted, no head, from=3-3, to=3-5]
	\arrow[dotted, no head, from=3-5, to=3-7]
	\arrow[dotted, no head, from=3-7, to=3-9]
	\arrow[dotted, no head, from=3-9, to=3-11]
	\arrow[dotted, no head, from=2-2, to=2-4]
	\arrow[dotted, no head, from=2-4, to=2-6]
	\arrow[dotted, no head, from=2-6, to=2-8]
	\arrow[dotted, no head, from=2-8, to=2-10]
	\arrow[dotted, no head, from=2-1, to=2-2]
	\arrow[dotted, no head, from=2-10, to=2-11]
\end{tikzcd}}\]
where the first and last column are identified. Moreover, $\mcC$ is Frobenius.\\

Let $\add(\mcX)=\mcX\subseteq \mcC$ be the
subcategory closed under extensions of $\mcC$
whose indecomposable objects are given by 
$\circ$ in the following diagram 
\[\ind (\mcX):=\tiny{\begin{tikzcd}
	\circ && \circ && \circ && \circ && \circ && \circ \\
{} & \bullet && \bullet && \bullet && \circ && \bullet & {} \\
	\bullet && \circ && \bullet && \bullet && \bullet && \bullet
	\arrow[from=1-1, to=2-2]
	\arrow[from=2-2, to=3-3]
	\arrow[from=3-1, to=2-2]
	\arrow[from=2-2, to=1-3]
	\arrow[from=1-3, to=2-4]
	\arrow[from=2-4, to=3-5]
	\arrow[from=3-3, to=2-4]
	\arrow[from=2-4, to=1-5]
	\arrow[from=1-5, to=2-6]
	\arrow[from=2-6, to=3-7]
	\arrow[from=3-5, to=2-6]
	\arrow[from=2-6, to=1-7]
	\arrow[from=1-7, to=2-8]
	\arrow[from=2-8, to=3-9]
	\arrow[from=3-7, to=2-8]
	\arrow[from=2-8, to=1-9]
	\arrow[from=1-9, to=2-10]
	\arrow[from=2-10, to=3-11]
	\arrow[from=3-9, to=2-10]
	\arrow[from=2-10, to=1-11]
	\arrow[dotted, no head, from=3-1, to=3-3]
	\arrow[dotted, no head, from=3-3, to=3-5]
	\arrow[dotted, no head, from=3-5, to=3-7]
	\arrow[dotted, no head, from=3-7, to=3-9]
	\arrow[dotted, no head, from=3-9, to=3-11]
	\arrow[dotted, no head, from=2-2, to=2-4]
	\arrow[dotted, no head, from=2-4, to=2-6]
	\arrow[dotted, no head, from=2-6, to=2-8]
	\arrow[dotted, no head, from=2-8, to=2-10]
	\arrow[dotted, no head, from=2-1, to=2-2]
	\arrow[dotted, no head, from=2-10, to=2-11]
\end{tikzcd}}\]
In this case, we have that $\mcX$ is $2$-rigid and functorially
finite in $\mcC$ and the equality 
$\mcX^{\perp_1}={}^{\perp_1}\mcX$ holds true.
Moreover, the indecomposable objects in $\mcX^{\perp_1}$ are marked by $\circ$ in the 
following diagram   
\[\ind (\mcX^{\perp_1}):=\tiny{\begin{tikzcd}
	\circ && \circ && \circ && \circ && \circ && \circ \\
{} & \circ && \circ && \bullet && \circ && \bullet & {} \\
	\bullet && \circ && \bullet && \circ && \circ && \bullet
	\arrow[from=1-1, to=2-2]
	\arrow[from=2-2, to=3-3]
	\arrow[from=3-1, to=2-2]
	\arrow[from=2-2, to=1-3]
	\arrow[from=1-3, to=2-4]
	\arrow[from=2-4, to=3-5]
	\arrow[from=3-3, to=2-4]
	\arrow[from=2-4, to=1-5]
	\arrow[from=1-5, to=2-6]
	\arrow[from=2-6, to=3-7]
	\arrow[from=3-5, to=2-6]
	\arrow[from=2-6, to=1-7]
	\arrow[from=1-7, to=2-8]
	\arrow[from=2-8, to=3-9]
	\arrow[from=3-7, to=2-8]
	\arrow[from=2-8, to=1-9]
	\arrow[from=1-9, to=2-10]
	\arrow[from=2-10, to=3-11]
	\arrow[from=3-9, to=2-10]
	\arrow[from=2-10, to=1-11]
	\arrow[dotted, no head, from=3-1, to=3-3]
	\arrow[dotted, no head, from=3-3, to=3-5]
	\arrow[dotted, no head, from=3-5, to=3-7]
	\arrow[dotted, no head, from=3-7, to=3-9]
	\arrow[dotted, no head, from=3-9, to=3-11]
	\arrow[dotted, no head, from=2-2, to=2-4]
	\arrow[dotted, no head, from=2-4, to=2-6]
	\arrow[dotted, no head, from=2-6, to=2-8]
	\arrow[dotted, no head, from=2-8, to=2-10]
	\arrow[dotted, no head, from=2-1, to=2-2]
	\arrow[dotted, no head, from=2-10, to=2-11]
\end{tikzcd}}\]
Thus, $\mathcal{R}_\mcC^{1}(\mcX)=\mcX^{\perp_1}$.

On the other hand, since $\mathcal{P}(\mcC)=\mathcal{I}(\mcC)\subseteq \mcX$, from
\cite[Corollary 3.9]{huerta2024reduction}, we 
know that $\mathcal{R}_\mcC^{1}(\mcX)$ is Frobenius and $J'=\add(\mcX\cup \mathcal{P}(\mcC))=\add(\mcX)$ by Lemma~\ref{lem: KS}. Therefore, from Theorem~\ref{teo: equiv extriang} and Proposition~\ref{pro: KS}, there is an
equivalence of extriangulated categories 
$\mathcal{R}_\mcC^{1}(\mcX)/[J']\cong \pi(\mcX^{\perp_1})/[\add(\pi(\mcX))]$ whose indecomposable objects 
are marked by $\circ$ below
\[\ind (\mathcal{R}_\mcC^{1}(\mcX)/[J'])=\tiny{\begin{tikzcd}
	\bullet && \bullet && \bullet && \bullet && \bullet && \bullet \\
{} & \circ && \circ && \bullet && \bullet && \bullet & {} \\
	\bullet && \bullet && \bullet && \circ && \circ && \bullet
	\arrow[from=1-1, to=2-2]
	\arrow[from=2-2, to=3-3]
	\arrow[from=3-1, to=2-2]
	\arrow[from=2-2, to=1-3]
	\arrow[from=1-3, to=2-4]
	\arrow[from=2-4, to=3-5]
	\arrow[from=3-3, to=2-4]
	\arrow[from=2-4, to=1-5]
	\arrow[from=1-5, to=2-6]
	\arrow[from=2-6, to=3-7]
	\arrow[from=3-5, to=2-6]
	\arrow[from=2-6, to=1-7]
	\arrow[from=1-7, to=2-8]
	\arrow[from=2-8, to=3-9]
	\arrow[from=3-7, to=2-8]
	\arrow[from=2-8, to=1-9]
	\arrow[from=1-9, to=2-10]
	\arrow[from=2-10, to=3-11]
	\arrow[from=3-9, to=2-10]
	\arrow[from=2-10, to=1-11]
	\arrow[dotted, no head, from=3-1, to=3-3]
	\arrow[dotted, no head, from=3-3, to=3-5]
	\arrow[dotted, no head, from=3-5, to=3-7]
	\arrow[dotted, no head, from=3-7, to=3-9]
	\arrow[dotted, no head, from=3-9, to=3-11]
	\arrow[dotted, no head, from=2-2, to=2-4]
	\arrow[dotted, no head, from=2-4, to=2-6]
	\arrow[dotted, no head, from=2-6, to=2-8]
	\arrow[dotted, no head, from=2-8, to=2-10]
	\arrow[dotted, no head, from=2-1, to=2-2]
	\arrow[dotted, no head, from=2-10, to=2-11]
\end{tikzcd}}\]
Furthermore, $\mcX^{\perp_1}/[J']$ is triangulated (see \cite[Corollary 7.4]{NP19}).
\end{example}

\section{\textbf{On the compatibility with triangulated categories}}\label{sec: compatibility with triangulated}

As we can see in Example~\ref{ex2} from a Frobenius reduction it is possible to get a triangulated category (see  \cite[Corollary 7.4]{NP19}). Indeed, it follows from a more general fact which involves 
the concept of reduction. In \cite{FMP23}, it was shown that the reduction of $\mcC$ at $\mcX$, where $\mcX$ is a rigid and functorially finite subcategory of $\mcC$, is Frobenius when $\mcC$ is a  Frobenius extriangulated category. From this and  
Definition~\ref{def: mi reduction}, a natural question arises: 
is the quotient category determined by class of its projective-injective objects in $\mathcal{R}_\mcC^{n+1}(\mcX)$ triangulated in the general case (non Frobenius)? Below we present an example which gives a negative answer. To do that, we begin by recalling 
the following definition.

\begin{definition}\cite[Definition 3.2]{msapato2024characterization}
Let $(\mcC, \mbE, \mathfrak{s})$ be an 
extriangulated category and suppose that $\mcC$ 
admits a triangulated structure $(\mcC, [1], \Delta)$. We say that this triangulated 
structure is \emph{$\mbE$-compatible} if for
each distinguished triangle $X\mathop{\longrightarrow}\limits^{u} Y\mathop{\longrightarrow}\limits^{v} Z\to X[1]$
we have that $X\mathop{\longrightarrow}\limits^{u} Y\mathop{\longrightarrow}\limits^{v} Z\mathop{\dashrightarrow}\limits^{\delta}$ is an $\mbE$-triangle 
for some $\delta\in \mbE(Z, X)$.
\end{definition}

In \cite{msapato2024characterization} is given an answer of when an extriangulated category admits an 
$\mbE$-compatible triangulated structure through the following characterization.

\begin{theorem}\cite[Theorem 3.3]{msapato2024characterization}\label{teo: E-comp}
    Let $(\mcC, \mbE, \mathfrak{s})$ be an extriangulated category. Then, $\mcC$ has an
    $\mbE$-compatible triangulated structure $(\mcC, [1], \Delta)$ if and only if for
    every object $C\in \mcC$, the morphism $C\to 0$ is an $\mbE$-inflation and the 
    morphism $0\to C$ is an $\mbE$-deflation.
\end{theorem}

The examples shown so far correspond to the case of Frobenius reduction and therefore a triangulated structure is expected. We finish this section with another example of an extriangulated category whose quotient category determined by the class of its projective-injective objects is non $\mbE$-compatible 
for any triangulated structure on it. 

\begin{example}\label{ex: red no frobenius}
Let $\Lambda$ be the self-injective Nakayama algebra given in Example~\ref{ex4}.

We depict the Auslander-Reiten quiver of
$\underline{\modu}(\Lambda)$ in the diagram below as follows:
\[
\adjustbox{scale=0.64,center}{
\begin{tikzcd}[column sep=tiny,row sep=huge]
	\times && a && \bullet && \blacklozenge && \bullet && \bullet && \bullet && \spadesuit && \times && \times && \times && \times && \times \\
	{{}} & a && \bullet && \bullet && \bullet && \bullet && \bullet && \bullet && b && \times && \times && \times && \times & {{}} \\
	\clubsuit && \blacklozenge && \bullet && \bullet && \bullet && \blacklozenge && \bullet && \bullet && b && \times && \bigstar && \times && \clubsuit
	\arrow[dashed, no head, from=2-1, to=2-2]
	\arrow[dashed, no head, from=2-24, to=2-25]
	\arrow[dashed, no head, from=1-1, to=1-3]
	\arrow[dashed, no head, from=1-3, to=1-5]
	\arrow[dashed, no head, from=1-5, to=1-7]
	\arrow[dashed, no head, from=1-7, to=1-9]
	\arrow[dashed, no head, from=1-9, to=1-11]
	\arrow[dashed, no head, from=1-11, to=1-13]
	\arrow[dashed, no head, from=1-13, to=1-15]
	\arrow[dashed, no head, from=1-15, to=1-17]
	\arrow[dashed, no head, from=1-17, to=1-19]
	\arrow[dashed, no head, from=1-19, to=1-21]
	\arrow[dashed, no head, from=1-21, to=1-23]
	\arrow[dashed, no head, from=1-23, to=1-25]
	\arrow[dashed, no head, from=2-24, to=2-22]
	\arrow[dashed, no head, from=3-25, to=3-23]
	\arrow[dashed, no head, from=2-22, to=2-20]
	\arrow[dashed, no head, from=3-23, to=3-21]
	\arrow[dashed, no head, from=2-20, to=2-18]
	\arrow[dashed, no head, from=3-21, to=3-19]
	\arrow[dashed, no head, from=2-18, to=2-16]
	\arrow[dashed, no head, from=3-19, to=3-17]
	\arrow[dashed, no head, from=2-16, to=2-14]
	\arrow[dashed, no head, from=3-17, to=3-15]
	\arrow[dashed, no head, from=3-15, to=3-13]
	\arrow[dashed, no head, from=3-13, to=3-11]
	\arrow[dashed, no head, from=2-14, to=2-12]
	\arrow[dashed, no head, from=2-12, to=2-10]
	\arrow[dashed, no head, from=2-2, to=2-4]
	\arrow[dashed, no head, from=2-4, to=2-6]
	\arrow[dashed, no head, from=2-6, to=2-8]
	\arrow[dashed, no head, from=2-8, to=2-10]
	\arrow[dashed, no head, from=3-1, to=3-3]
	\arrow[dashed, no head, from=3-3, to=3-5]
	\arrow[dashed, no head, from=3-5, to=3-7]
	\arrow[dashed, no head, from=3-7, to=3-9]
	\arrow[dashed, no head, from=3-9, to=3-11]
	\arrow[from=1-1, to=2-2]
	\arrow[from=2-2, to=3-3]
	\arrow[from=3-1, to=2-2]
	\arrow[from=2-2, to=1-3]
	\arrow[from=3-3, to=2-4]
	\arrow[from=2-4, to=1-5]
	\arrow[from=1-3, to=2-4]
	\arrow[from=2-4, to=3-5]
	\arrow[from=3-5, to=2-6]
	\arrow[from=1-5, to=2-6]
	\arrow[from=2-6, to=3-7]
	\arrow[from=2-6, to=1-7]
	\arrow[from=1-7, to=2-8]
	\arrow[from=2-8, to=3-9]
	\arrow[from=3-7, to=2-8]
	\arrow[from=2-8, to=1-9]
	\arrow[from=1-9, to=2-10]
	\arrow[from=2-10, to=3-11]
	\arrow[from=3-9, to=2-10]
	\arrow[from=2-10, to=1-11]
	\arrow[from=1-11, to=2-12]
	\arrow[from=2-12, to=3-13]
	\arrow[from=3-11, to=2-12]
	\arrow[from=2-12, to=1-13]
	\arrow[from=1-13, to=2-14]
	\arrow[from=2-14, to=3-15]
	\arrow[from=3-13, to=2-14]
	\arrow[from=2-14, to=1-15]
	\arrow[from=1-15, to=2-16]
	\arrow[from=3-15, to=2-16]
	\arrow[from=2-16, to=3-17]
	\arrow[from=2-16, to=1-17]
	\arrow[from=1-17, to=2-18]
	\arrow[from=2-18, to=3-19]
	\arrow[from=3-17, to=2-18]
	\arrow[from=2-18, to=1-19]
	\arrow[from=1-19, to=2-20]
	\arrow[from=2-20, to=3-21]
	\arrow[from=3-19, to=2-20]
	\arrow[from=2-20, to=1-21]
	\arrow[from=1-21, to=2-22]
	\arrow[from=2-22, to=3-23]
	\arrow[from=3-21, to=2-22]
	\arrow[from=2-22, to=1-23]
	\arrow[from=1-23, to=2-24]
	\arrow[from=2-24, to=3-25]
	\arrow[from=3-23, to=2-24]
	\arrow[from=2-24, to=1-25]
\end{tikzcd}
}
\]
where the first and the last column are identified.\\

Let $\mcY=\add(a \cup b)$ and $\mcC\subseteq \underline{\modu}(\Lambda)$ be as in Example~\ref{ex4} rewritten as 
$$\mcC=\add (a \cup \clubsuit\cup \bullet\cup \blacklozenge\cup b\cup \spadesuit \cup \bigstar).$$

Under the above setting, we have that:
$\mcY$ is $4$-rigid, functorially finite in $\mcC$ and the equality $$\mcY^{\perp_{\leq 3}}={}^{\perp_{\leq 3}}\mcY=
\add (a \cup \clubsuit\cup \blacklozenge\cup b \cup \spadesuit \cup \bigstar)$$ holds. Thus, the reduction of $\mcC$ at $\mcY$ is $\mathcal{R}^{3}_\mcC(\mcY)=\mcY^{\perp_{\leq 3}}$ which is not Frobenius by 
\cite[Corollary 3.9]{huerta2024reduction}.

Notice also that $\mcC$ is a Krull-Schmidt 
extriangulated category by Proposition~\ref{pro: KS}. So, from 
Lemma~\ref{lem: KS}, the equality 
$J':=\add(\mcY\cup J)=\add(a\cup b \cup \bigstar)$
holds true. Thus, by
Theorem~\ref{teo: equiv extriang} we have 
an equivalence of extriangulated categories
$$\mathcal{R}_\mcC^{3}(\mcY)/[J']\cong \pi(\mcY^{\perp_{\leq 3}})/ [\add(\pi(\mcY))]=\add (\clubsuit\cup \blacklozenge\cup \spadesuit).$$

Finally, notice that the 
morphism $0\to \clubsuit$ is not an $\mbE$-deflation in $\mathcal{R}^{3}_\mcC(\mcY)/[J']$ due to $\clubsuit$ is projective. 
Therefore, the quotient 
$\mathcal{R}^{3}_\mcC(\mcY)/[J']$
does not admit an $\mbE$-compatible 
triangulated structure by Theorem~\ref{teo: E-comp}.
\end{example}


\section*{\textbf{Acknowledgements}}
The author thanks professor Octavio Mendoza for his
comments on the first version of this manuscript.
The author would like to thank professor Corina Sáenz for all her support over the years.



\bibliographystyle{plain}
\bibliography{biblio23}

\begin{thebibliography}{10}

\bibitem{aihara2012silting}
T.~Aihara and O.~Iyama.
\newblock Silting mutation in triangulated categories.
\newblock {\em Journal of the London Mathematical Society}, 85(3):633--668,
  2012.

\bibitem{bennetttransport}
R.~Bennett-Tennenhaus and A.~Shah.
\newblock Transport of structure in higher homological algebra.
\newblock {\em Journal of Algebra}, 574:514--549, 2021.

\bibitem{CH25}
J.~C. Cala and S.~R. Hern\'andez.
\newblock A characterization of closed subfunctors through $3\times 3$-lemma
  property in extriangulated categories.
\newblock {\em Preprint. arXiv:2504.15579}, 2025.

\bibitem{FMP23}
E.~Faber, B.~R. Marsh, and M.~Pressland.
\newblock Reduction of {F}robenius extriangulated categories.
\newblock {\em arXiv:2308.16232}, 2023.

\bibitem{huerta2024reduction}
M.~Y. Huerta.
\newblock Reduction for $(n+ 2) $-rigid subcategories in extriangulated
  categories.
\newblock {\em Preprint arXiv:2401.17516}, 2024.

\bibitem{HMSS2023quotient}
M.~Y. Huerta, O.~Mendoza, C.~S{\'a}enz, and V.~Santiago.
\newblock Quotient categories with exact structure from $(n+ 2) $-rigid
  subcategories in extriangulated categories.
\newblock {\em Preprint arXiv:2309.14576}, 2023.

\bibitem{IYmutation}
O.~Iyama and Y.~Yoshino.
\newblock Mutation in triangulated categories and rigid {C}ohen-{M}acaulay
  modules.
\newblock {\em Invent. Math.}, 172(1):117--168, 2008.

\bibitem{jasso2015reduction}
G.~Jasso.
\newblock Reduction of $\tau$-tilting modules and torsion pairs.
\newblock {\em International Mathematics Research Notices},
  2015(16):7190--7237, 2015.

\bibitem{HK15}
H.~Krause.
\newblock Krull-schmidt categories and projective covers.
\newblock {\em Expo. Math.}, (33):535--549, 2015.

\bibitem{LNheartsoftwin}
Y.~Liu and H.~Nakaoka.
\newblock Hearts of twin cotorsion pairs on extriangulated categories.
\newblock {\em Journal of Algebra}, 528:96--149, 2019.

\bibitem{LZ20arising}
Y.~Liu and P.~Zhou.
\newblock Abelian categories arising from cluster tilting subcategories.
\newblock {\em Appl. Categ. Structures}, 28(4):575--594, 2020.

\bibitem{MDZtheoryAB}
Y.~Ma, N.~Ding, and Y.~Zhang.
\newblock Auslander-{B}uchweitz {A}pproximation {T}heory for {E}xtriangulated
  {C}ategories.
\newblock {\em Preprint. arXiv:2006.05112}, 2020.

\bibitem{msapato2024characterization}
D.~Msapato.
\newblock A characterization of extriangulated categories with triangulated
  structure.
\newblock {\em Communications in Algebra}, pages 1--16, 2024.

\bibitem{NOSlocalization}
H.~Nakaoka, Y.~Ogawa, and A.~Sakai.
\newblock Localization of extriangulated categories.
\newblock {\em Journal of Algebra}, 611:341--398, 2022.

\bibitem{NP19}
H.~Nakaoka and Y.~Palu.
\newblock Extriangulated categories, {H}ovey twin cotorsion pairs and model
  structures.
\newblock {\em Cah. Topol. G{\'e}om. Diff{\'e}r. Cat{\'e}g}, 60(2):117--193,
  2019.

\bibitem{Ogawa}
Y.~Ogawa.
\newblock Auslander's defects over extriangulated categories: An application
  for the general heart construction.
\newblock {\em Journal of the Mathematical Society of Japan}, 73(4):1063--1089,
  2021.

\end{thebibliography}
\end{document}